\newtheorem{theorem}{Theorem}[section]
\newtheorem{lemma}[theorem]{Lemma}
\newtheorem{proposition}[theorem]{Proposition}
\theoremstyle{definition}
\newtheorem{remark}[theorem]{Remark}
\numberwithin{equation}{section}
\renewcommand{\(}{\left(}
\renewcommand{\)}{\right)}
\begin{document}
\title[Nonlinear Schr\"odinger systems with a large number of components]{Segregated solutions for  nonlinear Schr\"odinger systems with a large number of components}

\author{Haixia Chen}
\address[H. Chen]{
 School of Mathematics and Statistics, Central China Normal University, Wuhan 430079, P. R. China.
}
\email{hxchen@mails.ccnu.edu.cn}

\author{Angela Pistoia}
\address[A. Pistoia]{Dipartimento di Scienze di Base e Applicate per l'Ingegneria, Sapienza Universit\`a di Roma, Via Scarpa 16, 00161 Roma, Italy}
\email{angela.pistoia@uniroma1.it}

\begin{abstract}
In this paper we are concerned with the existence of segregated non-radial solutions for  nonlinear Schr\"odinger systems with a large number of components   in a weak fully attractive or repulsive regime in presence of a suitable  external radial 
potential.\end{abstract}

\date\today
\subjclass[2010]{35B44, 35J47 (primary),  35B33 (secondary)}
\keywords{Schr\"odinger systems, segregated solutions, large number of components}
 \thanks{H. Chen is partially supported by the NSFC grants (No.12071169) and the China Scholarship Council (No. 202006770017).  A. Pistoia is also partially supported by INDAM-GNAMPA funds and Fondi di Ateneo ``Sapienza" Universit\`a di Roma (Italy).\\
Data sharing not applicable to this article as no datasets were generated or analysed during the current study.}

\maketitle

\section{Introduction}

The well-known Gross-Pitaevskii system  
\begin{equation}\label{GP}
- \iota \partial_t  \phi_i = \Delta  \phi_i -V_i(x) \phi_i+ \mu_i| \phi_i|^2 \phi_i+  \sum\limits_{j=1\atop j\not=i}^m\beta_{ij}
| \phi_j|^{2}  \phi_i ,\ i=1,\dots,m\end{equation}
 has been proposed as a mathematical model for multispecies
Bose-Einstein condensation in $m$ different  states. We refer to \cite{11,12,M,15,23} for a detailed physical motivation.
Here the complex valued functions $ \phi_i$'s are the wave functions of
the $i-$th condensate, $|\phi_i|$ is the amplitude of the $i$-th density, $ \mu_i$    describes  the interaction between particles of the same 
component
and $\beta_{ij}$, $i\not=j,$  describes   the interaction between particles   of different components, which can be {\em attractive} if $\beta_{ij}>0$ or {\em repulsive} if
  $\beta_{ij}<0$. 
To obtain solitary wave solutions of the Gross-Pitaevskii system \eqref{GP} one sets  $ \phi_i(t,x) = e^{- \iota \lambda_i t} u_i(x)$ and the real functions $u_i$'s solve the system
\begin{equation}\label{S}  -\Delta u_i +\lambda_i u_i +V_i(x) u_i= \mu_i u_i^3+u_i   \sum \limits_{ j=1\atop j\not=i }^m \beta_{ij} u_j^2  \ \hbox{in}\ \mathbb R^n,\ i =1,\dots,m\end{equation}
 where $\mu_i >0$, $\lambda_i>0$, $ \beta_{ij} =\beta_{ji}\in\mathbb R$, $V_i\in C^0(\mathbb R^n)$ and $n\ge2.$
\\

  We are interested in finding solutions whose components  $u_i$ do not identically vanish for any index $i=1,\dots,m.$
\\

In the last decades,   the 
nonlinear Schrödinger system \eqref{S} has been widely studied.   Most of the work has been done  in the autonomous case (i.e.  $V_i$ is constant) and in the sub-critical regime (i.e.  $n=2$ or $n=3$). We refer the reader to a couple of   recent papers   \cite{bmw,WW} where the authors provide an exhaustive list of  references. There are a few results concerning the
 non-autonomous case, which  have been  recently obtained by Peng and Wang \cite{PW}, Pistoia and Vaira \cite{PV}
 and Li, Wei and Wu \cite{liweiwu}.\\

In the critical regime (i.e.  $n=4$) the existence of solutions to  \eqref{S} is a delicate and much more difficult issue. While there are some results concerning the  autonomous case obtained by Clapp and Pistoia \cite{CP}, Clapp and Szulkin \cite{CS} and  Chen, Medina and Pistoia \cite{CMP}, a very few is known about the non-autonomous one. As far as we know the first result is due to  Chen, Pistoia and Vaira \cite{CPV}, where system  \eqref{S}   is studied in a fully symmetric regime, namely   all the coupling parameters $\beta_{ij}$'s are equal to a real number $\beta$ and all the potentials $V_i$'s coincide with a positive and radially symmetric function  $V$, namely the system \eqref{S} reduces to the system
 \begin{equation}\label{beg}
-\Delta u_i+V(x)u_i=u_i^3+\beta \sum_{j\neq i}u_i u_j^2\text{~in~}{\mathbb R}^4, i=1,...,m.
\end{equation} In particular,   using this symmetric setting, if
\begin{equation}\label{ri} {\mathscr R}_i :=\(\begin{matrix}\cos \frac{2(i-1)\pi}{m } &\sin\frac{2(i-1)\pi}{m } &0&0\\
-\sin\frac{2(i-1)\pi}{m } &\cos\frac{2(i-1)\pi}{m } &0&0\\
0&0&1&0 \\
0&0&0 &1 \\
 \end{matrix}\right)\ \hbox{for any}\  i=1,...,m,\end{equation}
the authors build a solution $u_i(x)=u({\mathscr R}_i x)$ to the system \eqref{beg} via a solution $u$ to the {\em non-local} equation
\begin{align}\label{nlo}
-\Delta u+V(x)u=u^3+\beta u\sum_{i=2}^mu^2({\mathscr R}_i x)\ \text{~in~}{\mathbb R}^4.
\end{align}
Then, using a Ljapunov-Schmidt procedure and adapting the argument developed by Peng, Wang and Yan \cite{PWY}, they build a solution to \eqref{nlo} using the  {\em bubbles} 
  \begin{align}\label{udx}U_{\delta, \xi}(x)=\frac{1}{\delta} U\(\frac{x-\xi}{\delta}\)\ \hbox{with}\ U(x)=\frac{{\mathtt c}}{1+|x|^2},\ {{\mathtt c}}=2\sqrt{2},\end{align}
which  are all the positive solutions to the critical equation
 $$-\Delta u=u^{3} \text{~in~}{\mathbb R}^4.$$
 The   solution  to \eqref{nlo} they build looks like the sum of a large number $k$ of bubbles, i.e.  $\sum _{\ell=1}^k U_{\delta,\xi_\ell},$ 
 whose peaks $\xi_\ell$ are located at the vertices of a regular polygon placed in the sphere $\mathtt S(r_k):=\{(x_1,x_2,0,0)\ |\ x_1^2+x_2^2=r_k^2\}.$ Moreover,
  the radius $r_k$ as $k\to\infty$ approaches    a  non-degenerate critical point $r_0$ of the function $ r\to r^2V(r)$. 
 As a result, the solution to the system \eqref{beg}  are of {\em segregated} type in the sense that different components blow-up at different points as $k$ is large enough. Indeed, the shape of the function  $u_i$ resembles $k$ copies of bubbles whose    $k$ peaks are the points $\xi^i_\ell={({\mathscr R}_i )^{-1}}\xi_\ell,{ \ell=1,...,k}$ (see \eqref{ri}).

 The segregation phenomena  has been widely studied by Terracini and her collaborators in a series of papers 
(see for example \cite{CTV,29,37,38,39} and references therein) and naturally appears in the strongly repulsive case (i.e.  $\beta_{ij}\to -\infty$).
In \cite{CPV} the existence of this kind of solutions is not affected at all by the presence of the coupling parameter $\beta$: they do  exist in both fully repulsive (i.e. $\beta<0$) and attractive regime (i.e. $\beta>0$). This is due to the fact  that the regime of the system which is entirely encrypted in  the non-local term  of \eqref{nlo}  does not appear  in the main terms of the reduced problem.
Therefore, a natural question arises. \\
{\em (Q): \ do there exist any functions $V$  (possibly changing-sign)   such that existence of solutions to the system \eqref{beg} is affected by the sign of the parameter $\beta$?}
\\ 
In this paper, we give a positive answer.\\

%We consider the  problem \begin{align}\label{nlo}
%-\Delta u+V(x)u=u^3+\beta u\sum_{i=2}^mu^2({\mathscr R}_i x)\ \text{~in~}{\mathbb R}^4.
%\end{align}
%where 
%\begin{equation}\label{ri} {\mathscr R}_i :=\(\begin{matrix}\cos \frac{2(i-1)\pi}{m } &\sin\frac{2(i-1)\pi}{m } &0&0\\
%-\sin\frac{2(i-1)\pi}{m } &\cos\frac{2(i-1)\pi}{m } &0&0\\
%0&0&1&0 \\
%0&0&0 &1 \\
% \end{matrix}\right)\ \hbox{for any}\  i=1,...,m,\end{equation}
%Note that $\mathscr R_i$ is nothing but the the matrix $\mathscr R_i^1 $ defined in \eqref{ri}.\\
We   assume that   $V$ is radially symmetric, $V\in  C^2({\mathbb R}^4)\cap {L^{2}(\mathbb R^4)}$, 
either $V\ge0$ or
$\|V\|_{L^{2}(\mathbb R^4)}\leq {4\over 3}\pi^2$
(see Remark \ref{rem1}), and
\begin{equation}\label{r0}
r_0>0\ \hbox{is a  non-degenerate critical point of  }\ r\to r^2V(r)\ \hbox{and}\ V(r_0)\not=0. 
\end{equation}
Our main result reads as follows.
\begin{theorem}\label{thm1.1}
There exists $m_0>0$ such that for any $m\geq m_0$, if $\beta= \mathfrak b{m^{-\alpha}}$ for some $\mathfrak b\not=0$ and $\alpha>0$ and if  $\mathfrak b V(r_0)>0,$ there exists a solution $u\in \mathcal D^{1,2}({\mathbb R}^4) $ to problem \eqref{nlo}  such that
as $m\to +\infty$,
\begin{equation*} u(x)\sim U_{\delta,  \zeta},\ \hbox{with}\ \zeta=(\rho,0,0,0),\ \rho\sim r_0\ \hbox{and}\  \delta\sim {\mathfrak d}r_0^2|V(r_0)|^{\frac12} \frac{1}{|\beta|^{\frac12}m^2}\end{equation*}  for some positive constant $\mathfrak d$.
\end{theorem}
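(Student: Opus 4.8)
The plan is to realize the solution through a finite-dimensional Lyapunov--Schmidt reduction built on the single bubble $U_{\delta,\zeta}$ of \eqref{udx} with $\zeta=(\rho,0,0,0)$, following the scheme of \cite{CPV,PWY} but now with the two concentration parameters $(\delta,\rho)$ carrying all the information. First I would fix the variational framework: equation \eqref{nlo} is the Euler--Lagrange equation of
$$J(u)=\frac12\int_{\mathbb R^4}\left(|\nabla u|^2+V u^2\right)-\frac14\int_{\mathbb R^4}u^4-\frac\beta4\sum_{i=2}^m\int_{\mathbb R^4}u^2(x)\,u^2(\mathscr R_i x)\,dx,$$
where the assumption $V\ge0$ or $\|V\|_{L^2}\le\frac43\pi^2$ (Remark~\ref{rem1}) guarantees, via the Sobolev inequality, that the quadratic part is positive definite and equivalent to the $\mathcal D^{1,2}$ norm. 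I would then restrict to the subspace of functions that are even in $x_2$ and axially symmetric in the $(x_3,x_4)$--plane. Since the cyclic group $\{\mathscr R_i\}$ is carried into itself by the reflection $x_2\mapsto-x_2$ and commutes with every rotation in $(x_3,x_4)$, the functional $J$ leaves this subspace invariant, so the reduction closes there; and because $\zeta$ lies on the $x_1$--axis, $U_{\delta,\zeta}$ already belongs to it. The symmetry kills the translation directions $\partial_{\zeta_2}U_{\delta,\zeta},\partial_{\zeta_3}U_{\delta,\zeta},\partial_{\zeta_4}U_{\delta,\zeta}$ of the bubble kernel, leaving only $Z_0=\partial_\delta U_{\delta,\zeta}$ and $Z_1=\partial_{\zeta_1}U_{\delta,\zeta}$; by the classical non-degeneracy of the bubbles the linearized operator is invertible on the orthogonal complement of $\mathrm{span}\{Z_0,Z_1\}$.

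Next I would carry out the reduction. Writing $u=U_{\delta,\zeta}+\phi$ with $\phi\perp\{Z_0,Z_1\}$, I would solve the projection of \eqref{nlo} onto the complement for $\phi=\phi_{\delta,\rho}$ by a contraction mapping argument, after estimating in a suitable weighted norm the error $-\Delta U_{\delta,\zeta}+VU_{\delta,\zeta}-U_{\delta,\zeta}^3-\beta U_{\delta,\zeta}\sum_{i\ge2}U_{\delta,\zeta}^2(\mathscr R_i\cdot)$, whose two sources are the potential term $VU_{\delta,\zeta}$ and the non-local coupling. It then remains to find a critical point of the reduced energy $\widetilde J(\delta,\rho)=J(U_{\delta,\zeta}+\phi_{\delta,\rho})$.

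The heart of the proof is the asymptotic expansion of $\widetilde J$. Using $U_{\delta,\zeta}(\mathscr R_i x)=U_{\delta,\mathscr R_i^{-1}\zeta}(x)$, the non-local term is a sum of pairwise interactions between the central bubble at $\zeta$ and its rotated copies at the vertices $\mathscr R_i^{-1}\zeta$ of a regular $m$--gon of radius $\rho$, at mutual distances $d_\ell=2\rho\sin\frac{(\ell-1)\pi}{m}$. A direct computation in the critical dimension $n=4$ then gives, up to the bubble's fixed energy $c_0$,
$$\widetilde J(\delta,\rho)=c_0+a_1\,V(\rho)\,\delta^2\,\log\tfrac1\delta-a_2\,\beta\,\frac{m^4}{\rho^4}\,\delta^4\,\log\tfrac{1}{m\delta}+\text{h.o.t.},$$
with $a_1,a_2>0$ explicit, the factor $m^4/\rho^4$ arising because nearest neighbours dominate the sum $\sum_\ell d_\ell^{-4}$ (the residual series $\sum_\ell(\ell-1)^{-4}$ converges). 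Optimizing in $\delta$ forces the two displayed terms to balance, which is possible precisely when $V(\rho)\beta>0$, i.e.\ when $\mathfrak b V(r_0)>0$, and yields $\delta\sim\mathfrak d\,r_0^2|V(r_0)|^{1/2}|\beta|^{-1/2}m^{-2}$ (the ratio of the two logarithms being a constant absorbed into $\mathfrak d$). Substituting back, the reduced energy along this curve is proportional to $\beta^{-1}m^{-4}\big(\rho^2V(\rho)\big)^2$ times a slowly varying logarithmic factor, so its critical points in $\rho$ coincide with the critical points of $r\mapsto r^2V(r)$. The non-degeneracy hypothesis \eqref{r0} and $V(r_0)\ne0$ then produce, through an implicit-function or local-degree argument, a non-degenerate critical point $(\delta,\rho)$ with $\rho\sim r_0$, hence a genuine solution of \eqref{nlo} of the stated form.

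I expect the main obstacle to be exactly this expansion in the critical case: the logarithmic divergence of $\int U_{\delta,\zeta}^2$ and of the two-bubble interaction $\int U_{\delta,\zeta}^2U_{\delta,\zeta'}^2$ must be tracked so sharply that the (distinct but comparable, both of order $\log m$) logarithms in the potential and in the coupling are controlled through the balance, and the full sum of $m-1$ interaction terms must be estimated uniformly as $m\to\infty$. Simultaneously, since every competing term is itself small and $m$--dependent, one must show that all remainders, together with the contribution of $\phi_{\delta,\rho}$, are negligible relative to the main terms. This delicacy is precisely what makes the sign condition $\mathfrak b V(r_0)>0$ indispensable and answers question (Q): if $\mathfrak b V(r_0)<0$ the two competing terms reinforce each other, no interior balance for $\delta$ exists, and the construction fails.
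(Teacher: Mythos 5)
Your proposal identifies exactly the mechanism that drives the paper's proof --- the balance between the potential term $V(\rho)\,\delta^2\log\tfrac1\delta$ and the non-local interaction $\beta\,\tfrac{m^4}{\rho^4}\,\delta^4\log\tfrac1{m\delta}$, which is possible only when $\beta V(r_0)>0$, yields $\delta\sim \mathfrak d\, r_0^2|V(r_0)|^{1/2}|\beta|^{-1/2}m^{-2}$, and pins $\rho$ near $r_0$ through criticality of $r\mapsto r^2V(r)$ --- but your route through the reduction is genuinely different from the paper's on two counts. First, the paper's ansatz is $\chi U_{\delta,\rho\xi_1}+\varphi$ with a radial cut-off $\chi$ supported near the sphere $|x|=r_0$, not a plain bubble; the cut-off keeps all error estimates local and is what allows $V$ to be merely $C^2\cap L^2$ without decay hypotheses. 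Second, and more substantially, the paper never expands a reduced energy $\widetilde J(\delta,\rho)$: the reduction is non-variational. The $\delta$-equation is obtained by projecting the equation onto $Z^0_{\delta,\rho\xi_1}=\partial_\delta(\chi U_{\delta,\rho\xi_1})$ (the proposition giving \eqref{ex1}), while the $\rho$-equation comes from a \emph{local Pohozaev identity}, i.e.\ testing with $\langle x,\nabla u\rangle$ over the annulus $D_\varepsilon$ (the proposition giving \eqref{pooo2}), in the spirit of Peng--Wang--Yan \cite{PWY} and \cite{CPV}. This produces $\partial_\rho(\rho^2V(\rho))=o(1)$ directly, with remainders controlled only in energy/$L^{4/3}$ norms, and so it completely bypasses the $C^1$-in-$(\delta,\rho)$ expansion of a reduced functional that your scheme requires. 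What your variational route buys is a cleaner conceptual picture (a strict maximum in $\delta$ when $\mathfrak b V(r_0)>0$, and a non-degenerate critical point in $\rho$ inherited from $(\rho^2V)^2$, both stable under small $C^1$ perturbations); what it costs is precisely the extra work of showing that every remainder --- including the contribution of $\phi_{\delta,\rho}$ and the derivatives of the $m$-dependent interaction sums with respect to both parameters --- is $C^1$-small relative to main terms that are themselves $m$-dependent and only logarithmically large.

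One point you should not absorb into ``standard arguments'': the invertibility of the linearized operator is \emph{not} the classical non-degeneracy of the bubble alone. The operator \eqref{l} contains the potential $V$ and the $m$-dependent non-local term $-\beta\varphi\sum_{i\ge2}(\chi U_{\delta,\rho\xi_i})^2$, and the cross term $2\beta\chi^2U_{\delta,\rho\xi_1}\sum_{i\ge2}U_{\delta,\rho\xi_i}\varphi(\mathscr R_i x)$, which is linear in $\varphi$, is relegated to $\mathcal N(\varphi)$ in \eqref{n} and must be shown perturbatively negligible. Uniform-in-$m$ invertibility is the content of Proposition \ref{proplin}, whose compactness-contradiction argument uses $\beta=o(m^{-1})$ (so that $|\beta|\,m\,\|U_{\delta,\rho\xi_i}\|_{L^4}^2\to0$); the same smallness would be needed in your contraction step and $C^1$ estimates, so the restriction on $\alpha$ (the paper's final step in fact takes $1<\alpha<2$) should appear explicitly in your argument rather than being left implicit, and the coercivity of $-\Delta+V$ under $\|V\|_{L^2}\le\frac43\pi^2$ (Remark \ref{rem1}) is what makes the norm $\|\cdot\|_V$ equivalent to $\|\cdot\|$, as you correctly noted.
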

According to the previous discussion,   as an immediate consequence of Theorem \ref{thm1.1} we get the following existence result of solutions for the system \eqref{beg}.
\begin{theorem}
There exists $m_0>0$ such that for any $m\geq m_0$, if $\beta= \mathfrak b{m^{-\alpha}}$ for some $\mathfrak b\not=0$ and $\alpha>0$ and if  $\mathfrak b V(r_0)>0,$ there exists a solution   $u_i(x)=u(\mathscr R_i x),$ $i=1,\dots,m$ to system \eqref{beg}, where $u$ is given in Theorem \ref{thm1.1}  and $\mathscr R_i $  is defined in  \eqref{ri}.
\end{theorem}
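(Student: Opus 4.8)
This statement is an immediate consequence of Theorem \ref{thm1.1}, so the plan is essentially a verification. First I would take the solution $u\in\mathcal D^{1,2}(\mathbb R^4)$ to the non-local equation \eqref{nlo} furnished by Theorem \ref{thm1.1} and define the candidate components by the symmetric ansatz
\[
u_i(x):=u(\mathscr R_i x),\qquad i=1,\dots,m,
\]
with $\mathscr R_i$ the rotations from \eqref{ri}. Since $u\sim U_{\delta,\zeta}\not\equiv 0$, each $u_i$ is a nontrivial component, so it only remains to check that $(u_1,\dots,u_m)$ solves the coupled system \eqref{beg}.

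The whole verification hinges on one group-theoretic observation. The matrices $\mathscr R_1,\dots,\mathscr R_m$ act as rotations of the $(x_1,x_2)$-plane by the angles $\tfrac{2(i-1)\pi}{m}$ and fix the $(x_3,x_4)$-coordinates; hence they form a cyclic group $G$ of order $m$ generated by $\mathscr R_2$, with $\mathscr R_1=\mathrm{Id}$ and, reading indices modulo $m$, $\mathscr R_\ell\mathscr R_i=\mathscr R_{\ell+i-1}$. Because right multiplication by the fixed element $\mathscr R_i$ is a bijection of $G$, as $\ell$ ranges over $\{2,\dots,m\}$ the product $\mathscr R_\ell\mathscr R_i$ ranges exactly over $G\setminus\{\mathscr R_i\}=\{\mathscr R_j:\ j\neq i\}$. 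I would record this as the coupling identity
\[
\sum_{\ell=2}^m u^2(\mathscr R_\ell\mathscr R_i x)=\sum_{j\neq i}u^2(\mathscr R_j x)=\sum_{j\neq i}u_j^2(x).
\]

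Next I would insert the ansatz into the left-hand side of \eqref{beg} and exploit that each $\mathscr R_i$ is orthogonal: this gives $-\Delta u_i(x)=(-\Delta u)(\mathscr R_i x)$ and $|\mathscr R_i x|=|x|$, so the radial symmetry of $V$ yields $V(x)u_i(x)=V(\mathscr R_i x)u(\mathscr R_i x)$. Evaluating the non-local equation \eqref{nlo} at the point $\mathscr R_i x$ then produces
\[
-\Delta u_i(x)+V(x)u_i(x)=\bigl(-\Delta u+Vu\bigr)(\mathscr R_i x)=u^3(\mathscr R_i x)+\beta\,u(\mathscr R_i x)\sum_{\ell=2}^m u^2(\mathscr R_\ell\mathscr R_i x).
\]
Using $u^3(\mathscr R_i x)=u_i^3(x)$, $u(\mathscr R_i x)=u_i(x)$ and substituting the coupling identity from the previous step, the right-hand side collapses to $u_i^3(x)+\beta\,u_i(x)\sum_{j\neq i}u_j^2(x)$, which is precisely the $i$-th equation of \eqref{beg}.

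I do not expect a genuine analytic obstacle in this argument: all the difficulty is already absorbed into the existence statement of Theorem \ref{thm1.1}. The only delicate point is the reindexing of the coupling sum through the cyclic group action, and this is exactly the mechanism that converts the single scalar non-local equation \eqref{nlo} into the full $m$-component system \eqref{beg}.
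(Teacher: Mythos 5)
Your proposal is correct and follows exactly the mechanism the paper relies on: the paper treats this theorem as an immediate consequence of Theorem \ref{thm1.1} via the symmetric ansatz $u_i(x)=u(\mathscr R_i x)$ described in the introduction (following \cite{CPV}), which is precisely the cyclic-group reindexing argument you spell out. Your write-up simply makes explicit the verification that the paper leaves implicit.
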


Let us make some comments.

\begin{remark}
If we want to find solutions depending on the sign of the parameter $\beta,$
  the coupling term $\beta u\sum_{i=2}^mu^2({\mathscr R}_ix)$ must be present  in the reduced problem
 \eqref{ex1}.
 To achieve this goal, we introduce the number $m$  of components as a large parameter. Unfortunately, the choice of a  large number of interaction
 as to be balanced by a suitable small parameter $\beta$ which depends on $m$. It would be extremely interesting to understand if this is a purely technical issue or not.
\end{remark}

\begin{remark}\label{rem1}
In order to build the solutions  in the competitive regime, i.e. $\beta<0$,  the function  $V$ has to be negative somewhere. In this case, we require that the linear operator
$-\Delta +V\mathtt {Id}$ is coercive and this is true if 
$$\|V\|_{L^2(\mathbb R^4)}< \min\limits_{u\in \mathcal D^{1,2}(\mathbb R^4)\setminus\{0\}}{\int\limits_{\mathbb R^4}|\nabla u|^2\over (\int\limits_{\mathbb R^4}|u|^4)^\frac12}=\frac43 \pi^2$$
as a direct computation shows (the minimum is achieved at the bubbles \eqref{udx}).
\end{remark}

\begin{remark}
The  system \eqref{S} in low dimensions, i.e. $n=2$ or $n=3$,
has been recently studied by Pistoia and Vaira in \cite{PV} in the same fully symmetric regime (see also  \cite{liweiwu}). They are lead to find solutions to the  non-local subcritical equation  \eqref{beg} defined in $\mathbb R^2$ or $\mathbb R^3$ (instead of $\mathbb R^4$). 
The existence of solutions in   the competitive (i.e. $\beta<0$) or cooperative regime (i.e. $\beta>0$) strongly depends on    the fact that   the radial potential $V$ has a maximum or a minimum at infinity, respectively. 
We believe that the idea of using the number of components as a large parameter  could also be applied in this setting to produce a new kind of solutions.
\end{remark}

 {\em Notations.} In the following we agree that $f\lesssim g$ or $f=\mathcal O(g)$ means $|f|\le C |g| $  for some positive constant $C$ independent of $m$ and $f\sim g$  means $f= g+o(g)$.

%\begin{theorem}\label{thm1.2}
%  There exists $k_0>0$ such that for each $k\geq k_0$, there exists a solution $(u_1, ..., u_m)\in [\mathcal D^{1,2}({\mathbb R}^4)]^m$ for problem \eqref{beg} in the form of
%$$u_i=\sum\limits_{i=1}^k U_{\delta, \xi^q_i}+\psi_q, i=1,...,m.$$
%where $\xi_i^q, \tilde\chi_q, U_{\delta, \xi}$ are defined as in \eqref{tixi}, \eqref{chi} and \eqref{udx} with
%$$\tilde{\rho}\to r_0, \delta=e^{-\tilde{d}_kk^2}, \tilde{d}_k\sim \frac{\tilde{\mathfrak b} }{\tilde{\mathfrak a}r_0^2V( r_0)}>0\text{~for some constants~} \tilde{\mathfrak a}, \tilde{\mathfrak b}>0, \|\psi_q\|_\star\to 0, i=1,...,m$$
%as $k\to +\infty$.
%\end{theorem}
%
\section{Proof of Theorem \ref{thm1.1}}
\subsection{The ansatz }

We will find solutions of \eqref{nlo} in the space
\begin{align*} X:=\{u\in \mathcal D^{1,2}({\mathbb R}^4): u \text{~satisfies~} \eqref{sy22} \}\text{~with~}\|u\|:=\bigg(\int\limits_{\mathbb R^4}|\nabla u|^2dx\bigg)^{\frac12}\end{align*}
where
%\begin{itemize}
%\item $f$ is even in both  $x_2$ and $x_4$ coordinates, i.e.,
 \begin{align}
u(x_1, x_2, x_3, x_4)=u(x_1, -x_2, x_3, x_4)=u(x_1, x_2, -x_3, x_4)=u(x_1,x_2,x_3,-x_4).\label{sy22}
%&f(x_1, x_2, x_3, x_4)=f(x_1, x_2, -x_3, x_4),\label{sy3}\\
%&f(x_1, x_2, x_3, x_4)=f(x_1, x_2, x_3, -x_4).\label{sy4}
%\end{align}
%\item $f$ is invariant under interchanging the $(x_1,x_2)$ and $(x_3,x_4)$ coordinates, i.e.,
%\begin{align}
%&u(x_1, x_2,x_3,x_4)=u(x_3,x_4,x_1,x_2),\label{sy33}\\
%\end{align}
%\item $f$ is  invariant under the transform ${\Theta}_i$(see \eqref{rotoi}), i.e.,
%\begin{align}
%&u(x)=u({\Theta}_ix),\  i=1,...,k, k \text{~is even}.\label{sy55}
\end{align}
%where $\tilde{\Theta}_i$ is from \eqref{rotoi}.
%$ has  $\frac{2\pi}{k}$ rotation invariance , i.e.,\begin{align}\label{sy6}f(x)=f(\bar\mathscr R_ix),\  i=1,...,k.\end{align}
%where $$\tilde\mathscr R_i:= \left(\begin{matrix}1&0&0&0\\
%0&1&0&0\\
%0&0&\cos \frac{2\pi(i-1)}{k} &-\sin \frac{2\pi(i-1)}{k} \\
%0&0&\sin\frac{2\pi(i-1)}{k}&\cos\frac{2\pi(i-1)}{k}
%\end{matrix}\right).$$
%\end{itemize}

In particular, we are going to build a solution to \eqref{nlo} as
\begin{align*}u=\chi U_{\delta, \rho\xi_1} +\varphi,\end{align*}
where the  bubbles $U_{\delta, \xi}$ is defined in  \eqref{udx} whose blow-up point is
\begin{align*}
\rho \xi_1:= \rho(1,0,0,0),\ |\rho-r_0|\le \vartheta \end{align*}
for some small $\vartheta>0$ and  blow-up rate satisfies $\delta=\frac{d}{|\beta|^{\frac12}m^2}$with $d\in [d_1, d_2]$ for any fixed $0<d_1<d_2<+\infty$. Here $r_0$ is given in \eqref{r0}.
Moreover, $\chi(x)=\chi(|x|)$  is a  radial   cut-off function whose support is close to the sphere $|x|:=r=r_0$, namely 
  \begin{equation*}\chi=1 \text{~in~}|r-r_0|\leq \sigma\ \hbox{and}\ \chi=0\text{~in~}|r-r_0|>2\sigma\ \hbox{for some $\sigma>0$ small}\end{equation*}
 and $\varphi$ is an higher order term.
 
Let us point out that the function $u_i $ defined by $u_i(x)=u(\mathscr R_i x)$  (see \eqref{ri} )
  blows up at a single point
\begin{align*}\rho\xi_i:=\rho({\mathscr R}_i^{-1}\xi_1)= &\rho\bigg(\cos\(\frac{2(i-1)\pi}m\), \sin\(\frac{2(i-1)\pi}m\), 0, 0\bigg). \end{align*}
\subsection{Rewriting the non-local equation via the Lyapunov Schmidt reduction method}

Subtituting $u=\chi U_{\delta, \rho\xi_1}+\varphi$ into  the non-local problem \eqref{nlo}, it can be rewritten as
\begin{align}\label{len}
\mathcal L(\varphi)=\mathcal E+\mathcal N(\varphi) \text{~in~}{\mathbb R}^4\end{align}
where $\mathcal L(\varphi), \mathcal E, \mathcal N(\varphi)$ are defined as
\begin{align}
&\mathcal L(\varphi):=-\Delta\varphi+V(x)\varphi-3(\chi U_{\delta, \rho\xi_1})^2\varphi-\beta\varphi \sum_{i=2}^m(\chi U_{\delta, \rho\xi_i})^2, \label{l}\\
&\mathcal E:=(\chi U_{\delta, \rho\xi_1})^3+\Delta (\chi U_{\delta, \rho\xi_1})-V(x)\chi U_{\delta, \rho\xi_1}+\beta \chi^3U_{\delta, \rho\xi_1}\sum_{i=2}^mU^2_{\delta, \rho\xi_i}, \label{e}\\
&\mathcal N(\varphi):=\varphi^3+3\chi U_{\delta, \rho\xi_1}\varphi^2+\beta\varphi\sum_{i=2}^m\varphi^2(\mathscr R_ix)+2\beta \chi\varphi\sum_{i=2}^m U_{\delta, \rho\xi_i}\varphi(\mathscr R_ix)\nonumber\\
&~~~~~~~~~~~~~~~~~~~~~~~\ \ \ \ \ \ \ \ \ \ +\beta \chi U_{\delta, \rho\xi_1}\sum_{i=2}^m\varphi^2(\mathscr R_ix)+2\beta\chi^2 U_{\delta, \rho\xi_1}\sum_{i=2}^mU_{\delta, \rho\xi_i}\varphi(\mathscr R_ix).\label{n}
\end{align}
As it is usual, to solve \eqref{len} we will follow the classical steps of the Ljapunov-Schmidt procedure:
\begin{itemize}
\item[(i)] we show there  exists   $\varphi\in X$ solution to the problem
\begin{equation}\label{exi}\left\{\begin{aligned}
&\mathcal L(\varphi)=\mathcal E+\mathcal N(\varphi)+\sum_{l=0}^1{\mathfrak c}_l(\delta, \rho)(\chi U_{\delta, \rho\xi_1})^2Z_{\delta,\rho\xi_1}^l \text{~in~}\mathbb R^4, \\
& \int\limits_{\mathbb R^4}(\chi U_{\delta, \rho\xi_1})^2Z_{\delta,\rho\xi_1}^l\varphi \ dx=0, l=0, 1\end{aligned}\right.\end{equation}
where $$Z^0_{\delta, \rho\xi_1}=\frac{\partial (\chi U_{\delta, \rho\xi_1})}{\partial \delta}\ \hbox{and}\ Z^1_{\delta, \rho\xi_1}=\frac{\partial (\chi U_{\delta, \rho\xi_1})}{\partial \rho}.
$$
\item[(ii)] we find $\delta>0$ and $\rho$ close to $r_0$ such that   ${\mathfrak c}_0(\delta, \rho)={\mathfrak c}_1(\delta, \rho)=0$.
\end{itemize} 

%\subsection{Prelimilaries}
At the beginning, we put forward two lemmas which are used frequently throughout the paper. The proofs can be founded by the similar arguments as Appendix A in \cite{WY} and Appendix A in \cite{MM} respectively.
\begin{lemma}\label{app1} For any $0<\tau\leq\min\{\tau_1, \tau_2\}, i\neq j,$ it holds
$$\begin{aligned}&\frac{1}{(1+|y-x_i|)^{\tau_1}}\frac{1}{(1+|y-x_j|)^{\tau_2}}\\
&~~~~~\lesssim \frac{1}{|x_i-x_j|^\tau}\(\frac{1}{(1+|y-x_i|)^{\tau_1+\tau_2-\tau}}+\frac{1}{(1+|y-x_j|)^{\tau_1+\tau_2-\tau}}\)\text{~for any~} y\in\mathbb R^4.\end{aligned}$$
\end{lemma}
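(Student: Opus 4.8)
The plan is to prove this pointwise inequality by elementary manipulations, treating $y$ as fixed and abbreviating $a:=|y-x_i|$ and $b:=|y-x_j|$. The only structural input I would use is the triangle inequality $|x_i-x_j|\le a+b$, which expresses that the two factors cannot both be concentrated near $y$ when the centers $x_i,x_j$ are far apart. Since the right-hand side of the claimed estimate is symmetric under simultaneously interchanging $(x_i,\tau_1)$ with $(x_j,\tau_2)$, and since the hypothesis $0<\tau\le\min\{\tau_1,\tau_2\}$ is preserved by that interchange, I may assume without loss of generality that $a\ge b$; the opposite case is obtained by swapping the roles of the two centers.

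Under the assumption $a\ge b$ the triangle inequality gives $|x_i-x_j|\le a+b\le 2a\le 2(1+a)$, so that $(1+a)\ge\tfrac12|x_i-x_j|$ and hence, using $\tau>0$,
\[
\frac{1}{(1+a)^{\tau}}\le\frac{2^{\tau}}{|x_i-x_j|^{\tau}}.
\]
I would then split the first factor as $(1+a)^{\tau_1}=(1+a)^{\tau}(1+a)^{\tau_1-\tau}$ and insert the displayed bound. This reduces the whole claim to the single-term estimate
\[
\frac{1}{(1+a)^{\tau_1-\tau}(1+b)^{\tau_2}}\ \lesssim\ \frac{1}{(1+b)^{\tau_1+\tau_2-\tau}},
\]
which, combined with the extracted factor $2^{\tau}/|x_i-x_j|^{\tau}$, lands inside the $(1+b)$-summand on the right-hand side of the lemma.

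The step that is the crux, and the precise place where the hypothesis $\tau\le\min\{\tau_1,\tau_2\}$ is used, is the monotonicity argument for this last inequality: because $\tau_1-\tau\ge0$ and $a\ge b$ we have $(1+a)^{\tau_1-\tau}\ge(1+b)^{\tau_1-\tau}$, so collapsing $(1+a)$ to $(1+b)$ in that factor produces exactly the exponent $\tau_1+\tau_2-\tau$ in the denominator. Beyond keeping the bookkeeping of exponents straight, I expect no genuine obstacle; the nonnegativity of the residual exponents $\tau_1-\tau$ and $\tau_2-\tau$ is exactly what makes the monotonicity admissible, and the resulting constant $2^{\tau}$ depends only on $\tau$ and is in particular independent of $m$, as the convention $\lesssim$ requires. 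Assembling the two symmetric cases then yields the full sum on the right-hand side; alternatively, one may simply cite the verbatim statement in Appendix A of \cite{WY}.
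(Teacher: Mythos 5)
Your proof is correct: the reduction by symmetry to the case $|y-x_i|\ge|y-x_j|$, the extraction of $|x_i-x_j|^{-\tau}$ via the triangle inequality, and the monotonicity step that uses $\tau\le\min\{\tau_1,\tau_2\}$ are precisely the ingredients of the standard argument. The paper gives no proof of its own, deferring to Appendix A of \cite{WY} (and \cite{MM}), and the argument there is essentially the one you wrote, so there is nothing to add.
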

\begin{lemma}
There exists a positive constant $C_\tau$ such that
%\begin{equation}\label{dis}|\xi_j-\xi_i| \sim\frac{|j-i|\pi}{m} \ \ j\neq i.\end{equation}
\begin{equation}\label{tau}
 \sum_{i=2}^m\frac{1}{|\xi_1-\xi_i|^{\tau}} \sim C_\tau m^\tau,\ \text{~for any~}\tau>1
\end{equation}
for some positive constant $C_\tau.$
\end{lemma}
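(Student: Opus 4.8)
The plan is to convert the abstract sum into an explicit trigonometric one and then read off its leading order through the Riemann zeta function. Recall from the ansatz that $\xi_1=(1,0,0,0)$ and, for $i=2,\dots,m$, that $\xi_i=\left(\cos\frac{2(i-1)\pi}{m},\sin\frac{2(i-1)\pi}{m},0,0\right)$, so the points $\xi_i$ are the $m$-th roots of unity lying on the unit circle of the $x_1x_2$-plane. The chord joining $\xi_1$ and $\xi_i$ therefore has length
$$|\xi_1-\xi_i|=2\sin\frac{(i-1)\pi}{m},$$
and, setting $k=i-1$, the sum to be estimated becomes
$$\sum_{i=2}^m\frac{1}{|\xi_1-\xi_i|^{\tau}}=\frac{1}{2^{\tau}}\sum_{k=1}^{m-1}\frac{1}{\sin^{\tau}\frac{k\pi}{m}}=:\frac{1}{2^{\tau}}S_m.$$

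Next I would extract the leading order of $S_m$. By the symmetry $\sin\frac{k\pi}{m}=\sin\frac{(m-k)\pi}{m}$ the summand at $k$ equals the one at $m-k$, so $S_m$ is governed by the two ends where $k$ is small and where $m-k$ is small; the bulk indices, for which $k/m$ stays in a fixed interval $[\varepsilon,1-\varepsilon]$, satisfy $\sin\frac{k\pi}{m}\ge\sin(\varepsilon\pi)>0$, so each of the $O(m)$ bulk terms is $O(1)$ and they contribute only $O(m)=o(m^{\tau})$ because $\tau>1$. On each end I would use Jordan's inequality $\frac{2}{\pi}x\le\sin x\le x$ on $[0,\pi/2]$, which with $x=\frac{k\pi}{m}$ gives the two-sided bound
$$\left(\frac{m}{\pi k}\right)^{\tau}\le\frac{1}{\sin^{\tau}\frac{k\pi}{m}}\le\left(\frac{m}{2k}\right)^{\tau}\qquad(1\le k\le m/2).$$
This already pins down the order $m^{\tau}$; it is precisely here that $\tau>1$ enters, guaranteeing that the emerging series $\sum_{k\ge1}k^{-\tau}=\zeta(\tau)$ converges.

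To obtain the exact constant I would pass to the limit by dominated convergence. Extend the end summand to all $k\ge1$ by setting it to $0$ once $k>m/2$. For each fixed $k$ one has $m\sin\frac{k\pi}{m}\to k\pi$, hence $m^{-\tau}\sin^{-\tau}\frac{k\pi}{m}\to(k\pi)^{-\tau}$, while the upper bound above furnishes the summable dominating sequence $(2k)^{-\tau}$. Therefore
$$\lim_{m\to\infty}\frac{1}{m^{\tau}}\sum_{1\le k\le m/2}\frac{1}{\sin^{\tau}\frac{k\pi}{m}}=\sum_{k=1}^{\infty}\frac{1}{(k\pi)^{\tau}}=\frac{\zeta(\tau)}{\pi^{\tau}}.$$
Adding the symmetric half and discarding the single possible middle term $O(1)=o(m^{\tau})$ gives $S_m\sim\frac{2\zeta(\tau)}{\pi^{\tau}}m^{\tau}$, so the lemma holds with $C_\tau=\frac{2^{1-\tau}\zeta(\tau)}{\pi^{\tau}}>0$.

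The main obstacle is exactly this interchange of limit and summation near the endpoints: the number of relevant terms grows with $m$, so a naive term-by-term Taylor expansion of the sine does not control the accumulated linearization error. The clean way around it is the combination used above --- the Jordan sandwich to secure a summable, $m$-independent majorant and the convergence of $\sum k^{-\tau}$ for $\tau>1$ to justify dominated convergence --- after which the remaining bookkeeping (the bulk estimate and the harmless middle term) is routine. If one only needed the order $S_m\approx m^{\tau}$ rather than the precise constant, the Jordan sandwich alone would suffice and the zeta identification could be omitted.
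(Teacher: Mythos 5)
Your proof is correct and follows essentially the same route as the source the paper relies on for this lemma (the paper gives no proof of its own, only a pointer to Appendix A of \cite{MM}): reduce to the chord lengths $2\sin\frac{(i-1)\pi}{m}$ of the regular $m$-gon and compare, via the Jordan bounds on $\sin$, with the convergent series $\sum_{k\ge 1}k^{-\tau}$, which is exactly where $\tau>1$ enters. Your dominated-convergence step additionally identifies the explicit constant $C_\tau=2^{1-\tau}\zeta(\tau)/\pi^{\tau}$, a refinement the paper never needs.
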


In the following proof, we will use the Jensen's inequality
\begin{equation*} 
\bigg(\frac1m\sum\limits_{i=1}^mt_i^p\bigg)^\frac1p\le \bigg(\frac1m\sum\limits_{i=1}^mt_i^q\bigg)^\frac1q\ \hbox{for any}\quad t_i\ge 0,\quad  0< p<q.\ 
\end{equation*}
When $p=1$ and $q>1$, it becomes
\begin{equation}\label{jen01}
\bigg(\sum\limits_{i=1}^{m}t_i\bigg)^q\le m^{q-1}\sum\limits_{i=1}^{m}t_i^q\ \hbox{for any}\ t_i\geqslant 0.
\end{equation}

%\begin{proof}
%It is enough to remark that $$ |\xi_j-\xi_i|^2=2\(1-\cos\frac{(j-i)\pi}m\)$$
%and $$ \sum_{i\neq j}\frac{1}{|\xi_j-\xi_i|^{\alpha}} \sim A_\alpha m^{\alpha}\sum_{i=2}^m\frac{1}{i^\alpha}$$
%for some constant $A_\alpha$ depending on $\alpha.$
% \end{proof}
Let us denote $\eta_i=\xi_i/\delta$ in the following.
\subsection{The size of the error term
 $\mathcal E$} 
\begin{proposition}\label{properr}Let $\mathcal E$ be defined as in \eqref{e}, then
$$\|\mathcal E\|_{L^{\frac{4}{3}}(\mathbb R^4)} \lesssim  \delta+|\beta|\delta^{2}m^{\frac{9}{4}} .$$
\end{proposition}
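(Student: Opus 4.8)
The plan is to split $\mathcal E$ as defined in \eqref{e} into its \emph{local} part $\mathcal E_{\mathrm{loc}}:=(\chi U_{\delta,\rho\xi_1})^3+\Delta(\chi U_{\delta,\rho\xi_1})-V(x)\chi U_{\delta,\rho\xi_1}$ and its \emph{nonlocal coupling} part $\mathcal E_{\mathrm{nl}}:=\beta\chi^3 U_{\delta,\rho\xi_1}\sum_{i=2}^m U^2_{\delta,\rho\xi_i}$, and to prove $\|\mathcal E_{\mathrm{loc}}\|_{L^{4/3}}\lesssim\delta$ and $\|\mathcal E_{\mathrm{nl}}\|_{L^{4/3}}\lesssim|\beta|\delta^2 m^{9/4}$ separately, the first term of the bound coming from the local part and the second from the coupling.

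For the local part I would exploit that the bubbles solve $-\Delta U_{\delta,\rho\xi_1}=U^3_{\delta,\rho\xi_1}$. Expanding $\Delta(\chi U_{\delta,\rho\xi_1})=\chi\Delta U_{\delta,\rho\xi_1}+2\nabla\chi\cdot\nabla U_{\delta,\rho\xi_1}+(\Delta\chi)U_{\delta,\rho\xi_1}$ and substituting the equation, the leading singular term $-\chi U^3_{\delta,\rho\xi_1}$ partially cancels the cubic term, leaving
$$\mathcal E_{\mathrm{loc}}=\chi(\chi^2-1)U^3_{\delta,\rho\xi_1}+2\nabla\chi\cdot\nabla U_{\delta,\rho\xi_1}+(\Delta\chi)U_{\delta,\rho\xi_1}-V(x)\chi U_{\delta,\rho\xi_1}.$$
The first three terms are supported on the fixed transition annulus $\sigma\le\big||x|-r_0\big|\le2\sigma$, where the peak $\rho\xi_1$ sits at a fixed distance, so that $U_{\delta,\rho\xi_1}\lesssim\delta$ and $|\nabla U_{\delta,\rho\xi_1}|\lesssim\delta$ pointwise; as this region has fixed finite measure, their $L^{4/3}$ norms are $\lesssim\delta$. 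For the potential term I would use that $V$ is bounded on the bounded support of $\chi$ together with the concentration estimate $\|\chi U_{\delta,\rho\xi_1}\|_{L^{4/3}}\lesssim\delta$, which follows from rescaling $x=\delta y$ and the observation that $\int_{\mathrm{supp}\,\chi(\delta\,\cdot)}U^{4/3}(y-\rho\eta_1)\,dy\lesssim\delta^{-4/3}$ (note the naive Hölder bound $\|V\|_{L^2}\|\chi U_{\delta,\rho\xi_1}\|_{L^4}$ only gives a constant, so the concentration of the $L^{4/3}$ mass is essential here). This yields $\|\mathcal E_{\mathrm{loc}}\|_{L^{4/3}}\lesssim\delta$.

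The heart of the matter is the nonlocal part. Rescaling $x=\delta y$ (the Jacobian $\delta^4$ cancelling exactly the factor $\delta^{-4}$ produced by the three bubbles) turns $\|\mathcal E_{\mathrm{nl}}\|_{L^{4/3}}^{4/3}$ into $|\beta|^{4/3}\int\chi^4(\delta y)\big(U(y-\rho\eta_1)\sum_{i=2}^m U^2(y-\rho\eta_i)\big)^{4/3}dy$, where the rescaled centers $\rho\eta_i=\rho\xi_i/\delta$ have mutual distances $|\rho\eta_1-\rho\eta_i|=\rho|\xi_1-\xi_i|/\delta$, which are large since $\delta\sim m^{-2}$ while $|\xi_1-\xi_i|\gtrsim 1/m$. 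I would first apply Jensen's inequality \eqref{jen01} with $q=4/3$ to linearize the inner sum at the cost of a factor $m^{1/3}$, reducing matters to the cross terms $\int U^{4/3}(y-\rho\eta_1)U^{8/3}(y-\rho\eta_i)\,dy$. Each is controlled by the decoupling Lemma \ref{app1}: bounding $U^{4/3}\lesssim(1+|\,\cdot-\rho\eta_1|)^{-8/3}$ and $U^{8/3}\lesssim(1+|\,\cdot-\rho\eta_i|)^{-16/3}$ and choosing $\tau=8/3$, which equals $\min\{8/3,16/3\}$ and satisfies $8-\tau>4$ so the resulting integrals converge in $\mathbb R^4$, gives $\int U^{4/3}(y-\rho\eta_1)U^{8/3}(y-\rho\eta_i)\,dy\lesssim|\rho\eta_1-\rho\eta_i|^{-8/3}\sim\delta^{8/3}|\xi_1-\xi_i|^{-8/3}$. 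Summing over $i$ with the polygon estimate \eqref{tau}, which gives $\sum_{i=2}^m|\xi_1-\xi_i|^{-8/3}\sim m^{8/3}$ since $8/3>1$, the three gains combine as $m^{1/3}\cdot\delta^{8/3}\cdot m^{8/3}=\delta^{8/3}m^{3}$; taking the $3/4$ power and restoring $|\beta|$ produces $\|\mathcal E_{\mathrm{nl}}\|_{L^{4/3}}\lesssim|\beta|\delta^2 m^{9/4}$.

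I expect the main obstacle to be the bookkeeping in the nonlocal estimate: one must choose the decoupling exponent $\tau$ in the narrow window $(1,4)$ — it must exceed $1$ for the polygon sum \eqref{tau} to be of the right order, be at most $\min\{\tau_1,\tau_2\}=8/3$ for Lemma \ref{app1} to apply, and stay below $4$ for the rescaled integrals to converge — and the value $\tau=8/3$ is precisely the choice at which all powers of $\delta$ and $m$ coalesce into $\delta^2 m^{9/4}$. I note in passing that using Minkowski's inequality in place of Jensen would bypass the $m^{1/3}$ loss and yield the sharper bound $|\beta|\delta^2 m^{2}$; since $m^2\le m^{9/4}$, this $m^{1/4}$ of slack is immaterial and the stated estimate follows either way.
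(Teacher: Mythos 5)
Your proof is correct and follows essentially the same route as the paper: the same splitting into the bubble-equation/cut-off terms, the potential term via the $L^{4/3}$ concentration estimate $\|\chi U_{\delta,\rho\xi_1}\|_{L^{4/3}}\lesssim\delta$, and the coupling term via Jensen's inequality \eqref{jen01} with $q=4/3$, Lemma \ref{app1} with $\tau=8/3$, and the polygon sum \eqref{tau}, yielding $|\beta|^{4/3}\delta^{8/3}m^{3}$ exactly as in \eqref{ee3}. Your closing observation that Minkowski's inequality would sharpen the coupling bound to $|\beta|\delta^{2}m^{2}$ is a nice (correct) aside, but, as you say, immaterial for the stated estimate.
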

\begin{proof}
Notice that
\begin{align}\label{e123}\mathcal E=\underbrace{(\chi U_{\delta, \rho\xi_1})^3+\Delta (\chi U_{\delta, \rho\xi_1})}_{:=\mathcal E_1}-\underbrace{V(x)\chi U_{\delta, \rho\xi_1}}_{:=\mathcal E_2}+\underbrace{\beta \chi^3U_{\delta, \rho\xi_1}\sum_{i=2}^mU^2_{\delta, \rho\xi_i}}_{:=\mathcal E_3}.\end{align}
%To compute $\mathcal{{E}}_{1}$ we split it so that 
On one side, it gives from direct computations
\begin{equation*}\begin{aligned}\int\limits_{\mathbb R^4}\mathcal E_1^\frac43dx\leq \int\limits_{\mathbb R^4}\Big| \Big( \chi U_{\delta, \rho\xi_i}\Big)^3-\chi U_{\delta, \rho\xi_i}^3+2\nabla \chi\nabla U_{\delta,\rho\xi_1}+\Delta \chi U_{\delta, \rho\xi_1}\Big|^{\frac43}\lesssim \delta^{\frac43},\end{aligned}\end{equation*}
\begin{align*}
\int\limits_{\mathbb R^4}\mathcal E_2^\frac43dx\lesssim \int_{|r-r_0|\leq\sigma}  U_{\delta, \rho\xi_i}^{\frac43}dx\lesssim \delta^{\frac43}.\end{align*}
And then using \eqref{jen01} and Lemma \ref{app1}, one can get \begin{align}
\int\limits_{\mathbb R^4}\mathcal E_3^\frac43dx\lesssim&|\beta|^{\frac43}\int_{|r-r_0|\leq\sigma}\Big(\sum_{i=2}^m\frac{\delta}{\delta^2+|x-\rho\xi_1|^2}\frac{\delta^2}{(\delta^2+|x-\rho\xi_i|^2)^2}\Big)^{\frac43}dx\nonumber\\
\lesssim&|\beta|^{\frac43}\int_{|r-r_0|\leq\frac{\sigma}{\delta}}m^{\frac13}\sum_{i=2}^m\frac{1}{\rho^{\frac83}|\eta_1-\eta_i|^{\frac83}}\Big(\frac{1}{(1+|y-\rho\eta_1|)^{\frac{16}{3}}}+\frac{1}{(1+|y-\rho\eta_i|)^{\frac{16}{3}}}\Big)dy\nonumber\\
\lesssim&|\beta|^{\frac43}\delta^{\frac83}m^3.\label{ee3}
\end{align}
%it follows $\|\mathcal E_2\|_{L^{\frac43}(\mathbb R^4)}=\mathcal O(\delta)$.
%By Taylor's expansion   for fixed $R>0$ and for any $y\in B(0, \frac{|\xi_i-\xi_j|}{2\delta }), \xi_i\neq \xi_j$
%\begin{align}& U_{\delta, \rho\xi_j}(\delta y+\rho\xi_i)\nonumber\\
%&= {\mathtt c} \frac{\delta}{\rho^2|\xi_j-\xi_i|^2}\bigg[1-\frac{2\delta\rho\langle y, \xi_i-\xi_j\rangle}{\rho^2|\xi_i-\xi_j|^2}+\mathcal O\bigg(\frac{\delta^2(1+|y|^2)}{\rho^2|\xi_j-\xi_i|^2}\bigg)\bigg],\label{tay}\end{align}
%%Since the cut off function $\chi$, 
%and so \begin{align}
%\int\limits_{\mathbb R^4}\mathcal E_3^\frac43dx
%%&\leq \int\limits_{\mathbb R^4}\bigg( \chi^3U_{\delta, \rho\xi_1}\sum_{i=2}^mU^2_{\delta, \rho\xi_i}\bigg)^{\frac43}dx\nonumber\\
%&\lesssim|\beta|^{\frac43} \bigg[\int_{ B(\xi_1, \frac{\vartheta}{m})}\bigg( U_{\delta, \rho\xi_1}\sum_{i=2}^mU^2_{\delta, \rho\xi_i}\bigg)^{\frac43}dx+\int_{\cup_{i=2}^m B(\xi_i, \frac{\vartheta}{m})}\bigg(U_{\delta, \rho\xi_1}\sum_{i=2}^mU^2_{\delta, \rho\xi_i}\bigg)^{\frac43}dx\bigg]\nonumber\\
%&\lesssim|\beta|^{\frac43} \bigg[ \delta^{\frac83}m^{\frac{16}{3}}\cdot \delta^{\frac43}\int_{B(0, \frac{\vartheta}{\delta m})}\frac{1}{(1+|y|^2)^{\frac43}}dy+\delta^{\frac43}m^{\frac83}\cdot \delta^{\frac43}m\int_{B(0, \frac{\vartheta}{\delta m})}\frac{1}{(1+|y|^2)^{\frac83}}\bigg]\nonumber\\
%&\lesssim |\beta|^{\frac43} \bigg[\delta^{4}m^{4}+\delta^{\frac{8}{3}}m^{\frac{11}{3}}\bigg]\label{11}
%\end{align}
%it follows $\|\mathcal E_3\|_{L^{\frac43}(\mathbb R^4)}=\mathcal O(|\beta|(\delta^{3}m^3+\delta^{2}m^{\frac{11}{4}}))$.
Therefore, it follows $\|\mathcal E\|_{L^{\frac43}(\mathbb R^4)}\lesssim \delta+|\beta|\delta^{2}m^{\frac{9}{4}}$.
\end{proof}
\subsection{The linear theory} 
\begin{proposition}\label{proplin}
Let  $\mathcal L(\varphi)$ be  defined as in \eqref{l} and $\beta=o(m^{-1})$. There exists a constant $m_0>0$ independent of $m$ such that for any $m\geq m_0$, $\delta=d \frac{1}{|\beta|^{\frac12}m^2}>0$ with $d\in [d_1, d_2]$ for any fixed $0<d_1< d_2<+\infty$, $\rho\in(r_0-\vartheta,r_0+\vartheta)$ with $\vartheta>0$ and for any $h\in L^{\frac43}({\mathbb R}^4)$ satisfying \eqref{sy22}, the linear problem
\begin{equation}\label{phi}\left\{\begin{aligned}&\mathcal L(\varphi)=h+\sum_{l=0}^1{\mathfrak c}_l (\chi U_{\delta, \rho\xi_1})^2Z_{\delta,\rho\xi_1}^l,\text{~in~} {\mathbb R}^4,\\&\int\limits_{\mathbb R^4} (\chi U_{\delta, \rho\xi_1})^2Z_{\delta,\rho\xi_1}^l\varphi \ dx=0, l=0,1\end{aligned}\right.\end{equation}
admits a unique solution $\varphi\in X$ satisfying 
\begin{align}\label{con}
\|\varphi\| \lesssim \|h\|_{L^{\frac43}(\mathbb R^4)}\text{~and~} {\mathfrak c}_l=\mathcal O(\delta \|h\|_{L^{\frac43}(\mathbb R^4)}), l=0,1
\end{align}
for some $\mathfrak c_0, \mathfrak c_1$.
\end{proposition}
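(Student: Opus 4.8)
The plan is to prove the invertibility of the projected operator in two stages: an \emph{a priori} estimate $\|\varphi\|\lesssim\|h\|_{L^{4/3}}$ valid for every solution of \eqref{phi}, obtained by a contradiction and blow-up argument, followed by existence and uniqueness through the Fredholm alternative in the closed subspace
$$X_\perp:=\Big\{\varphi\in X:\ \int_{\mathbb R^4}(\chi U_{\delta,\rho\xi_1})^2Z^l_{\delta,\rho\xi_1}\,\varphi\,dx=0,\ l=0,1\Big\}.$$
Because $-\Delta+V\,\mathtt{Id}$ is coercive (either $V\ge0$, or $\|V\|_{L^2}<\frac43\pi^2$ as in Remark \ref{rem1}), the bilinear form it induces is an inner product equivalent to $\|\cdot\|$ on $X$; moreover the two multiplication operators $\varphi\mapsto 3(\chi U_{\delta,\rho\xi_1})^2\varphi$ and $\varphi\mapsto\beta\,\varphi\sum_{i=2}^m(\chi U_{\delta,\rho\xi_i})^2$ appearing in $\mathcal L$ are compact from $X$ into its dual by Rellich's theorem. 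Hence, after Riesz representation, \eqref{phi} restricted to $X_\perp$ takes the form $\varphi-K_m\varphi=\tilde h$ with $K_m$ compact, and the whole statement reduces to injectivity, which is exactly the a priori estimate applied with $h=0$.

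For the a priori estimate I argue by contradiction, taking sequences $m\to\infty$, $\delta=\delta_m\to0$, $\rho=\rho_m\to r_0$, data $h_m$ with $\|h_m\|_{L^{4/3}}\to0$, and solutions $\varphi_m$ with $\|\varphi_m\|=1$. Rescaling around the single concentration point by $\tilde\varphi_m(y):=\delta_m\varphi_m(\delta_m y+\rho_m\xi_1)$, which leaves the $\mathcal D^{1,2}$-norm invariant, I get a bounded sequence with $\tilde\varphi_m\rightharpoonup\varphi_0$ along a subsequence. In the rescaled equation the potential contributes the factor $\delta_m^2V(\delta_m y+\rho_m\xi_1)\to0$, the coupling term is negligible by the estimate discussed below, the self-interaction $3(\chi U_{\delta,\rho\xi_1})^2$ converges to $3U^2$, and the right-hand side tends to zero; thus $\varphi_0$ solves $-\Delta\varphi_0=3U^2\varphi_0$ on $\mathcal D^{1,2}(\mathbb R^4)$. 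The symmetry \eqref{sy22} makes $\varphi_0$ even in $x_2,x_3,x_4$, so the only surviving elements of the kernel of $-\Delta-3U^2$ are the dilation and the $x_1$-translation, namely $Z^0,Z^1$; since the orthogonality conditions in \eqref{phi} pass to the limit, non-degeneracy of the bubble forces $\varphi_0=0$. I then test \eqref{phi} against $\varphi_m$ itself: the multiplier term drops out thanks to the orthogonality constraints, the term $\int h_m\varphi_m$ is $o(1)$, the coupling term is $o(1)$, and $\int(\chi U_{\delta,\rho\xi_1})^2\varphi_m^2=\int U^2\tilde\varphi_m^2\to\int U^2\varphi_0^2=0$ by local Rellich compactness together with the $L^4$-decay of $U$. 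Coercivity then gives $c=c\|\varphi_m\|^2\le\|\varphi_m\|^2+\int V\varphi_m^2=o(1)$, a contradiction.

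The main obstacle, and the precise reason the hypothesis $\beta=o(m^{-1})$ is imposed, is the uniform-in-$m$ control of the coupling term. By Hölder and the Sobolev embedding $\mathcal D^{1,2}(\mathbb R^4)\hookrightarrow L^4$ one has
$$\Big|\int_{\mathbb R^4}\beta\,\varphi^2\sum_{i=2}^m(\chi U_{\delta,\rho\xi_i})^2\,dx\Big|\le|\beta|\,\|\varphi\|_{L^4}^2\,\Big\|\sum_{i=2}^m(\chi U_{\delta,\rho\xi_i})^2\Big\|_{L^2},$$
and Jensen's inequality \eqref{jen01} together with the scale-invariance of $\|U_{\delta,\rho\xi_i}\|_{L^4}$ yields $\big\|\sum_{i=2}^m(\chi U_{\delta,\rho\xi_i})^2\big\|_{L^2}^2\lesssim m\sum_{i=2}^m\int U_{\delta,\rho\xi_i}^4\lesssim m^2$, so the whole term is $\mathcal O(|\beta|m)=o(1)$. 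Finally, to obtain the multipliers and their bound I test \eqref{phi} against $Z^j_{\delta,\rho\xi_1}$: the Gram matrix $\big[\int(\chi U_{\delta,\rho\xi_1})^2Z^l Z^j\big]_{l,j}$ is invertible with entries of order $\delta^{-2}$, while $\mathcal L(Z^j)$ is small (its cut-off error is supported in the fixed annulus away from the concentration point, and the potential and coupling contributions are negligible, the latter estimated through Lemma \ref{app1} and \eqref{tau}) and $|\int hZ^j|\lesssim\delta^{-1}\|h\|_{L^{4/3}}$. Solving the resulting $2\times2$ system and inserting the a priori bound $\|\varphi\|\lesssim\|h\|_{L^{4/3}}$ gives $\mathfrak c_l=\mathcal O(\delta\|h\|_{L^{4/3}})$, which completes \eqref{con}.
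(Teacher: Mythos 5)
Your proposal is correct and follows essentially the same route as the paper: existence/uniqueness via the Fredholm alternative on the orthogonal subspace, an a priori bound by a blow-up/contradiction argument in which the symmetry \eqref{sy22}, the orthogonality conditions and the non-degeneracy of the bubble force the rescaled weak limit to vanish, the coupling term killed by the $\mathcal O(|\beta|m)=o(1)$ estimate, and the multipliers obtained by testing against $Z^j_{\delta,\rho\xi_1}$. The only (minor) difference is ordering: the paper establishes the sharper bound $\mathfrak c_l=\mathcal O(\delta^2\|\varphi\|+\delta\|h\|_{L^{4/3}(\mathbb R^4)})$ \emph{before} passing to the rescaled limit, because precisely this bound justifies your assertion that the multiplier term on the right-hand side vanishes after rescaling (it contributes $\delta^{-1}\mathfrak c_l$ there), so in your write-up that testing computation should be moved to the front of the contradiction argument rather than left to the end.
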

\begin{proof}

\noindent \textbf{Step 1:} Assume first that \eqref{con}  holds. Define $$\tilde X:=\{\varphi\in X, \int\limits_{\mathbb R^4} (\chi U_{\delta, \rho\xi_1})^2Z_{\delta,\rho\xi_1}^l\varphi \ dx=0, l=0,1\}.$$
Denote the linear projection mapping $\Pi^{\perp}: X\to \tilde X$.

The first equation in \eqref{phi} can be rewritten as
\begin{align*}&\varphi+\underbrace{\Pi^{\perp}\big[(-\Delta)^{-1}(V(x)\varphi-3(\chi U_{\delta, \rho\xi_1})^2\varphi-\beta\varphi \sum_{i=2}^m(\chi U_{\delta, \rho\xi_i})^2)\big]}_{:=\mathcal K\varphi}\\
&\ \ \ =\underbrace{\Pi^{\perp}\big[(-\Delta)^{-1}(h+\sum_{l=0}^1{\mathfrak c}_l (\chi U_{\delta, \rho\xi_1})^2Z_{\delta,\rho\xi_1}^l)\big]}_{:=f}\end{align*}
where $\mathcal K: \tilde X\to \tilde X$ is a compact operator for each fixed $m$ since $U_{\delta, \rho\xi_1}^2, \sum\limits_{i=2}^mU_{\delta, \rho\xi_i}^2\in L^2({\mathbb R}^4), U_{\delta, \rho\xi_i}=\mathcal O(\frac{1}{|x|^2})$ if $|x|\to +\infty$, $V\in C^2(\mathbb R^{4})$ and $\|V\|_{L^{2}(\mathbb R^4)}$ is bounded if $V$ is non-negative otherwise $\|V\|_{L^{2}(\mathbb R^4)}\leq \frac43 \pi^2$ (see Lemma 2.3 in \cite{BS}) and there exist $\mathfrak c_0, \mathfrak c_1$ such that $f\in \tilde X$. Thus there is a unique $\varphi\in \tilde X$ such that $\varphi+\mathcal K\varphi=f$ with $f\in \tilde X$ by Fredholm-alternative theorem.
\vspace{1mm}

Now, what is left is to show that \eqref{con} holds ture. 
 Suppose that there exist $m_n\to +\infty$, $\varphi_{n}$ satisfying \eqref{phi} with $h=h_{n}$ such that $\|\varphi_{n}\|=1, \|h_{n}\|_{L^{\frac43}(\mathbb R^4)}\to 0$ as $n\to +\infty$. 

Letting $\tilde\varphi_n(x)=\delta_n\varphi_n(\delta_n x+\rho_n\xi_{1})$, up to a subsequence, we abtain 
$$\tilde\varphi_n\to \varphi^*\text{~weakly in~} \mathcal D^{1,2}(\mathbb R^4), \text{~strongly in~} L_{loc}^{p}(\mathbb R^4) \text{~for~} p\in [2, 4).$$
 \vspace{2mm}
  
 \noindent\textbf{Step 2:} Let us prove $\mathfrak c_l=\mathcal O(\delta_n\|\varphi_n\|)+\mathcal O(\delta_n \|h_n\|_{L^{\frac43}(\mathbb R^4)}), l=0,1$. For simplicity, we drop the subscript $n$ in this step. Multiplying the first equation in \eqref{phi} by $Z_{\delta, \rho\xi_1}^j, j=0, 1$, one can get
$$\begin{aligned}&\sum_{l=0}^1{\mathfrak c}_l\int\limits_{\mathbb R^4} (\chi U_{\delta, \rho\xi_i})^2Z^l_{\delta, \rho\xi_1}Z^j_{\delta, \rho\xi_1}dx\\ &=\int\limits_{\mathbb R^4}\(-\Delta\varphi+V(x)\varphi-3(\chi U_{\delta, \rho\xi_1})^2\varphi-\beta\varphi \sum_{i=2}^m(\chi U_{\delta, \rho\xi_i})^2-h\)Z^j_{\delta, \rho\xi_1}dx.\end{aligned}$$ 

Using straightforward computations and notice that $$\Big\|\frac{\partial U_{\delta, \rho\xi_i}}{\partial \delta}\Big\|_{L^{\frac43}(\mathbb R^4)}\leq C\text{~ and~} \Big\|\frac{\partial U_{\delta, \rho\xi_i}}{\partial \delta}\Big\|_{L^{4}(\mathbb R^4)}\lesssim \frac{1}{\delta},$$ we have for $j=0$($j=1$ is similar)
\begin{align*}
&\int\limits_{\mathbb R^4}(-\Delta\varphi-3(\chi U_{\delta, \rho\xi_1})^2\varphi)Z^0_{\delta, \rho\xi_1}dx\nonumber \\
%&\lesssim \int\limits_{\mathbb R^4}(-\Delta\varphi-3W^2\varphi)Z^0_{\delta, \rho\xi_1}dx\nonumber\\
%&=\int\limits_{\mathbb R^4}\varphi\(\frac{\partial(-\Delta\chi  U_{\delta, \rho\xi_1}-2\nabla \chi\nabla U_{\delta, \rho\xi_1}+\chi U^3_{\delta, \xi_1})}{\partial \delta}-3W^2Z^1_{\delta, \rho\xi_1}\)\nonumber\\
&\lesssim\bigg|\int\limits_{\mathbb R^4}\Big(3\chi U_{\delta, \rho\xi_1}^2\frac{\partial U_{\delta, \rho\xi_1}}{\partial \delta}-3\chi^3U_{\delta, \rho\xi_1}^2\frac{\partial U_{\delta, \rho\xi_1}}{\partial \delta}\Big)\varphi dx\nonumber\\&\ \ \ \ \ \ \ \ +
\int\limits_{\mathbb R^4}(-\Delta \chi)\frac{\partial U_{\delta, \rho\xi_1}}{\partial\delta}\varphi dx-\int\limits_{\mathbb R^4}2\nabla \chi \nabla\frac{\partial U_{\delta, \rho\xi_1}}{\partial\delta}\varphi dx\bigg|\nonumber\\
&\lesssim \|\varphi\|,
\end{align*}
\begin{align*}
\int\limits_{\mathbb R^4}(V(x)\varphi -h)Z^j_{\delta, \rho\xi_1}dx&\lesssim \|\varphi\|\|Z^j_{\delta, \rho\xi_1}\|_{L^{\frac43}(\mathbb R^4)}+\|h\|_{L^{\frac43}(\mathbb R^4)}\|Z^j_{\delta, \rho\xi_1}\|_{L^{4}(\mathbb R^4)}\\&\lesssim \|\varphi\|+\frac{\|h\|_{L^{\frac43}(\mathbb R^4)}}{\delta}.
\end{align*}
Similar to \eqref{ee3}, one can deduce
\begin{align*}
\int\limits_{\mathbb R^4}\beta\varphi \sum_{i=2}^m(\chi U_{\delta, \rho\xi_i})^2Z^j_{\delta, \rho\xi_1}dx\lesssim&  |\beta|\|\varphi\|\sum_{i=2}^m\bigg( \int\limits_{\mathbb R^4}\Big(\chi^2 U_{\delta, \rho\xi_i}^2Z^j_{\delta, \rho\xi_1}\Big)^{\frac43}dx\bigg)^{\frac34}\\ \lesssim& |\beta|\|\varphi\|\sum_{i=2}^m\frac{1}{\delta}\bigg(\int_{|r-r_0|\leq\sigma}\Big(U_{\delta, \rho\xi_i}^2U_{\delta, \rho\xi_1}\Big)^{\frac43}dx\bigg)^{\frac34}\\ \lesssim&  |\beta|\|\varphi\|\delta m^2
\end{align*}
due to $|Z^j_{\delta, \rho\xi_1}|\lesssim \frac{U_{\delta, \rho\xi_1}}{\delta}$.
%\begin{align*}
%&\bigg(\int\limits_{\mathbb R^4}( \sum_{i=2}^m(\chi U_{\delta, \rho\xi_i})^2Z^j_{\delta, \rho\xi_1})^{\frac43}dx\bigg)^{\frac34}
%%&\lesssim \frac{1}{\delta}\cdot (\delta^{3}m^3+\delta^{2}m^{\frac{11}{4}})\\
%\lesssim \delta^2m^3+\delta m^{\frac{11}{4}}.
%\end{align*}
In addition, it is easy to check that there exist constants $B_l$ such that \begin{align*}
\int\limits_{\mathbb R^4} (\chi U_{\delta, \rho\xi_1})^2Z^l_{\delta, \rho\xi_1}Z^j_{\delta, \rho\xi_1}dx=\left\{\begin{aligned} &B_l \delta^{-2}(1+o(1)), l=j,\\& 0, \ \ \ \ \ \ \ \ \ \ \ \ \ \ \ \ \ \ \ l\neq j.\end{aligned}\right.
\end{align*}
So we have $\mathfrak c_l=\mathcal O((\delta^2+|\beta|\delta^3 m^{2})\|\varphi\|)+\mathcal O(\delta \|h\|_{L^{\frac43}(\mathbb R^4)})=\mathcal O(\delta(\|\varphi\|+ \|h\|_{L^{\frac43}(\mathbb R^4)})),l=0,1,$ which ends this step.

Thus $\varphi^*$ satisfies
 $$-\Delta\varphi^*=3U^2\varphi^*, \text{~in~}\mathbb R^4.$$
  
 \noindent\textbf{Step 3:} Let us prove $\varphi^*=0$. Denote $Z_i=\frac{\partial U}{\partial x_i}, i=1,...,4.$
 Recalling the second equation in \eqref{phi}, it suffices to show that \begin{equation*}I_{in}:=\int\limits_{ \mathbb R^4}\tilde{\varphi}_n U ^2 Z_idx=0,\quad i=1,...,4,\end{equation*}
%where $$Z_0(x)={1-|x|^2\over (1+|x|^2)^2},\ Z_1(x)={x_1\over (1+|x|^2)^2},\ \dots,\ Z_4(x)={x_4\over (1+|x|^2)^2}.$$
Since $\varphi_n$ satisfies \eqref{sy22} and $\xi_1=(1, 0, 0,0)$, it follows $I_{2n}=I_{3n}=I_{4n}=0.$

Thanks to $\int\limits_{\mathbb R^4} (\chi U_{\delta_n, \rho_n\xi_{1}})^2Z_{\delta_n,\rho_n\xi_{1}}^1\varphi_n \ dx=0$, it derives $I_{in}=0, i=1,...,4.$
\vspace{2mm}

\noindent\textbf{Step 4:} Let us prove $\int\limits_{\mathbb R^4}|\nabla\varphi_n|^2dx\to 0$ as $n\to +\infty$. 

Firstly, since $\|V\|_{L^{2}(\mathbb R^4)}$ is bounded if $V$ is non-negative, otherwise $\|V\|_{L^{2}(\mathbb R^4)}\leq \frac43 \pi^2$,
%when $V$ is negative and $V\in L^2(\mathbb R^4)$ when $V$ is positive
we have $\|\varphi\|_V:=(\int\limits_{\mathbb R^4}|\nabla \varphi|^2+V(x)\varphi^2dx)^{\frac12}\lesssim \|\varphi\|\lesssim \|\varphi\|_V$. 

Testing the first equation in \eqref{phi} by $\varphi_n$,  one deduces from the second equation in \eqref{phi},
\begin{align*}
& \int\limits_{\mathbb R^4}|\nabla\varphi_n|^2+ V(x)\varphi_n^2dx\\
&=\int\limits_{\mathbb R^4}3\varphi_n^2(\chi U_{\delta_n, \rho_n\xi_1})^2dx+\int\limits_{\mathbb R^4}\beta \sum_{i=2}^m(\chi U_{\delta_n, \rho_n\xi_{in}})^2 \varphi_n^2dx+\int\limits_{\mathbb R^4}h_n\varphi_n dx.
%&\leq \int\limits_{\mathbb R^4}3\varphi_n^2(\chi U_{\delta_n, \rho_n\xi_1})^2dx +\int\limits_{\mathbb R^4}h_n\varphi_n dx.
\end{align*}
Since $\tilde\varphi_n\to 0$ weakly in $L^4(\mathbb R^4)$, it follows
\begin{align*}
\int\limits_{\mathbb R^4}3\varphi_n^2(\chi U_{\delta_n, \rho_n\xi_1})^2dx \sim 3\int\limits_{\mathbb R^4}\tilde\varphi_n^2 U^2dx\to 0.
\end{align*}
Reminding that $\beta=o(m^{-1})$, one can get
\begin{align*}
\beta\int\limits_{\mathbb R^4}\sum_{i=2}^m(\chi U_{\delta_n, \rho_n\xi_{in}})^2 \varphi_n^2dx\leq |\beta|\cdot m\|U_{\delta_n, \rho_n\xi_{in}}\|^2_{L^{4}(\mathbb R^4)}\|\varphi_n\|^2\to 0,
\end{align*}
and according to $\|h_n\|_{L^{\frac43}(\mathbb R^4)}\to 0, \|\varphi_n\|=1$, it yields $$\int\limits_{\mathbb R^4}h_n\varphi_n dx\to 0,$$
i.e.,  $\int\limits_{\mathbb R^4}|\nabla\varphi_n|^2+V(x)\varphi_n^2dx\to 0$ and then $\|\varphi_n\|\to 0$ as $n\to +\infty$, which contradicts with $\|\varphi_n\|=1$.
 \end{proof}
 
We are going to solve the non-linear problem \eqref{exi} by using the standard fixed point theorem. 
\begin{proposition}
Assume $\beta=o(m^{-1})$, there exists a constant $m_0>0$ independent of $m$ such that for any $m\geq m_0$, $\delta=d \frac{1}{|\beta|^{\frac12}m^2}>0$ with $d\in [d_1, d_2]$ for any fixed $0<d_1< d_2<+\infty$ and $\rho\in(r_0-\vartheta,r_0+\vartheta)$ with $\vartheta>0$, the problem \eqref{exi} has a unique solution $\varphi\in \tilde X$ satisfying
$$\|\varphi\|\lesssim \delta+\frac{1}{m^{\frac74}},\ \mathfrak c_l=o(1), l=0,1.$$ 
\end{proposition}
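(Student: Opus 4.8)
The plan is to recast \eqref{exi} as a fixed point equation and solve it by the Banach contraction principle, feeding the linear theory of Proposition \ref{proplin} with the right-hand side $h:=\mathcal E+\mathcal N(\varphi)$. Precisely, for $\varphi\in\tilde X$ the datum $h$ inherits the symmetry \eqref{sy22} and lies in $L^{4/3}(\mathbb R^4)$, so Proposition \ref{proplin} produces a unique $\mathcal T(\varphi)\in\tilde X$ solving the linear problem together with multipliers $\mathfrak c_l$; a fixed point of $\mathcal T$ is exactly a solution of \eqref{exi}. By \eqref{con} one has $\|\mathcal T(\varphi)\|\lesssim\|\mathcal E\|_{L^{4/3}}+\|\mathcal N(\varphi)\|_{L^{4/3}}$, and by Proposition \ref{properr} with $\delta=d\,|\beta|^{-1/2}m^{-2}$ the error satisfies $\|\mathcal E\|_{L^{4/3}}\lesssim\delta+|\beta|\delta^2m^{9/4}=\delta+d^2m^{-7/4}$, which already has the claimed size. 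I would therefore work in the ball $\mathcal B:=\{\varphi\in\tilde X:\|\varphi\|\le R\}$ with $R:=M(\delta+m^{-7/4})$ for a large constant $M$, and prove that $\mathcal T$ maps $\mathcal B$ into itself and is a contraction.

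The core of the argument is to estimate the six terms of $\mathcal N(\varphi)$ in \eqref{n} in $L^{4/3}(\mathbb R^4)$. The two purely local terms are standard: Hölder's inequality and $\|U_{\delta,\rho\xi_1}\|_{L^4}=\|U\|_{L^4}$ give $\|\varphi^3\|_{L^{4/3}}\lesssim\|\varphi\|^3$ and $\|\chi U_{\delta,\rho\xi_1}\varphi^2\|_{L^{4/3}}\lesssim\|\varphi\|^2$, both $o(R)$ on $\mathcal B$. For the nonlocal terms carrying $\beta\sum_{i=2}^m$, the rotational invariance of Lebesgue measure gives $\|\varphi(\mathscr R_i\cdot)\|_{L^4}=\|\varphi\|_{L^4}$, and Hölder (with $\tfrac34=\tfrac14+\tfrac12$) yields, for the three genuinely superlinear ones, bounds of the form $|\beta|\,m\,\|\varphi\|^3$ and $|\beta|\,m\,\|\varphi\|^2$. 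Dividing by $R$ and using $\delta=d\,|\beta|^{-1/2}m^{-2}$ together with $\beta=o(m^{-1})$, one checks $|\beta|\,mR^2=o(1)$ and $|\beta|\,mR=o(1)$, so these contributions are again $o(R)$.

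The main obstacle is the last term in \eqref{n}, namely $2\beta\chi^2U_{\delta,\rho\xi_1}\sum_{i=2}^mU_{\delta,\rho\xi_i}\varphi(\mathscr R_i\cdot)$, which is \emph{linear} in $\varphi$ but nonlocal, and so must be controlled by its operator norm rather than absorbed as a higher order term. By Hölder its $L^{4/3}$ norm is bounded by $|\beta|\,\|\varphi\|\sum_{i=2}^m\|U_{\delta,\rho\xi_1}U_{\delta,\rho\xi_i}\|_{L^2}$, and the crux is the two-bubble interaction at the critical exponent, where $U^2$ is borderline non-integrable. Rescaling and Lemma \ref{app1}, applied with $\tau_1=\tau_2=4$ and any $2<\tau<4$ (so that $8-\tau>4$ keeps the rescaled integral convergent), give $\|U_{\delta,\rho\xi_1}U_{\delta,\rho\xi_i}\|_{L^2}\lesssim\delta^{\tau/2}|\xi_1-\xi_i|^{-\tau/2}$. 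Summing over the $m-1$ components via \eqref{tau}, whose hypothesis $\tau/2>1$ holds precisely because $\tau>2$, produces $\sum_{i=2}^m\|U_{\delta,\rho\xi_1}U_{\delta,\rho\xi_i}\|_{L^2}\lesssim(\delta m)^{\tau/2}$, whence the operator norm of this term is $\lesssim|\beta|(\delta m)^{\tau/2}=d^{\tau/2}|\beta|^{1-\tau/4}m^{-\tau/2}=o(1)$ for $2<\tau<4$. This is exactly the step where the balancing of the large number $m$ of components against the small coupling $\beta=o(m^{-1})$ through the scaling $\delta\sim|\beta|^{-1/2}m^{-2}$ is essential.

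Putting these estimates together gives $\|\mathcal N(\varphi)\|_{L^{4/3}}=o(R)$ on $\mathcal B$, so $\|\mathcal T(\varphi)\|\le C(\delta+m^{-7/4})+o(R)\le R$ once $M$ is fixed large and $m\ge m_0$; hence $\mathcal T(\mathcal B)\subset\mathcal B$. The same expansions applied to $\mathcal N(\varphi_1)-\mathcal N(\varphi_2)$ show that the local terms have Lipschitz constants $\mathcal O(R)$ while the nonlocal linear term has Lipschitz constant equal to its operator norm $o(1)$, so $\mathcal T$ is a contraction on $\mathcal B$ for $m$ large. The Banach fixed point theorem then yields a unique $\varphi\in\mathcal B\subset\tilde X$ solving \eqref{exi} with $\|\varphi\|\lesssim\delta+m^{-7/4}$. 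Finally, \eqref{con} gives $\mathfrak c_l=\mathcal O(\delta\|h\|_{L^{4/3}})=\mathcal O\big(\delta(\delta+m^{-7/4})\big)=o(1)$, which completes the proof.
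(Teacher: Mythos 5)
Your proposal is correct and takes essentially the same route as the paper, which compresses the whole argument into one sentence (``a standard contraction mapping argument together with Proposition \ref{properr}''): you build the fixed point map from the linear theory of Proposition \ref{proplin}, work in a ball of radius $\sim\delta+m^{-7/4}$, and control the delicate nonlocal term that is linear in $\varphi$ via Lemma \ref{app1} and \eqref{tau}, exactly the tools the paper itself uses for analogous estimates. Your write-up merely supplies the details (the $L^{4/3}$ bounds on the six terms of $\mathcal N(\varphi)$ in \eqref{n}, the self-map and contraction properties, and the conclusion $\mathfrak c_l=\mathcal O(\delta\|h\|_{L^{4/3}})=o(1)$) that the authors leave implicit.
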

\begin{proof}
The proof depends on a standard contraction mapping argument together with Proposition \ref{properr} and then $$\|\varphi\|\lesssim \|\mathcal E\|_{L^{\frac43}(\mathbb R^4)}\lesssim \delta+|\beta|\delta^{2}m^{\frac{9}{4}}\lesssim \delta+\frac{1}{m^{\frac74}}.$$
\end{proof}

%First of all, arguing as in   \cite[Proposition 3.1]{PWY}, we prove that
%\begin{proposition}\label{prop5}Let $D_{\varepsilon}:=\{x: |r-r_0|\leq\varepsilon, \varepsilon\in(2\sigma, 5\sigma)\}$ for some $\sigma>0$ (see \eqref{zet}).
%Suppose that $(\delta,  \rho)$ satisfies
%\begin{align}
%&\int\limits_{\mathbb R^4}(\mathcal L(\varphi)-\mathcal E-\mathcal N(\varphi))Z^0_{\delta, \rho\xi_1} dx=0,\label{po1}\\
%&\int_{D_\varepsilon}(\mathcal L(\varphi)-\mathcal E-\mathcal N(\varphi))\langle x, \nabla u\rangle dx=0,\label{po2}
%%&\int_{D_\varepsilon}(\mathcal L(\varphi)-\mathcal E-\mathcal N(\varphi))\frac{\partial u}{\partial x_l} dx=0, l=3,4,\label{po3}
%\end{align}
%then ${\mathfrak c}_0={\mathfrak c}_1=0.$
% \end{proposition}
%
%Next, we compute the first order term in the expansion of the L.H.S. of \eqref{po1}.
We are going to choose $\delta>0, \rho\to r_0$ such that $\mathfrak c_0=\mathfrak c_1=0$ by using the following two propositions.
\subsection{The reduced problem}
 \begin{proposition} Let $\beta=o(m^{-1}), \delta=o(m^{-1})$. There exists a positive constant $\mathfrak b$ such that
\begin{align}\int\limits_{{\mathbb R}^4}(\mathcal L(\varphi)-\mathcal E-\mathcal N(\varphi))Z^0_{\delta, \rho\xi_1}&dx=-8V(\rho)\delta(\ln \delta) +\mathfrak b\beta\delta^3(\ln\delta m) \frac{ m^4}{\rho^4}
+\mathcal O(\delta+\frac{|\beta|m\|\varphi\|^2}{\delta}\nonumber\\&+\frac{\|\varphi\|^2}{\delta}+|\beta|\|\varphi\|^2\delta|\ln\delta|^{\frac12}m^{2}+|\beta|\|\varphi\|\delta m^{2}+\|\varphi\|)\label{ex1}
%&\int\limits_{{\mathbb R}^4}(\mathcal L(\varphi)-\mathcal E-\mathcal N(\varphi))\frac{\partial W}{\partial \bar{x}_l} =k(\mathfrak b_3\delta^2\ln\delta\frac{\partial V( r  , \bar{x}_3, \bar{x}_4)}{\partial \bar{x}_l}+\mathcal O(\delta)), l=3,4.\label{ex3}
\end{align}
where $\mathcal L(\varphi), \mathcal E, \mathcal N(\varphi)$ are defined in \eqref{l}-\eqref{n}.
\end{proposition}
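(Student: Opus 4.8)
The plan is to test the identity against $Z^0_{\delta,\rho\xi_1}=\partial_\delta(\chi U_{\delta,\rho\xi_1})=\chi\,\partial_\delta U_{\delta,\rho\xi_1}$ and split the left-hand side into three groups according to \eqref{l}, \eqref{e} and \eqref{n}: the piece $\int\mathcal L(\varphi)Z^0_{\delta,\rho\xi_1}$, which is linear in $\varphi$; the piece $-\int\mathcal E\,Z^0_{\delta,\rho\xi_1}$, which carries the two explicit main terms; and the piece $-\int\mathcal N(\varphi)Z^0_{\delta,\rho\xi_1}$, which is at least quadratic in $\varphi$. I expect every contribution except the two displayed ones to be absorbed into the error $\mathcal O(\cdots)$, so the computation really amounts to extracting the leading order of $-\int\mathcal E\,Z^0_{\delta,\rho\xi_1}$ and bounding the rest.

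For the main term I would use the decomposition $\mathcal E=\mathcal E_1-\mathcal E_2+\mathcal E_3$ of \eqref{e123}. Since $-\Delta U_{\delta,\rho\xi_1}=U_{\delta,\rho\xi_1}^3$, the term $\mathcal E_1$ is supported in the cut-off annulus $\sigma\le|r-r_0|\le 2\sigma$, where both $\mathcal E_1$ and $Z^0_{\delta,\rho\xi_1}$ are $\mathcal O(\delta)$; hence $\int\mathcal E_1 Z^0_{\delta,\rho\xi_1}=\mathcal O(\delta)$. The genuinely leading contribution is the potential term: writing $-\mathcal E_2=V\chi U_{\delta,\rho\xi_1}$ and using that $\chi,V$ are independent of $\delta$,
\[
\int_{\mathbb R^4} V\chi^2 U_{\delta,\rho\xi_1}\,\partial_\delta U_{\delta,\rho\xi_1}\,dx=\tfrac12\,\partial_\delta\!\int_{\mathbb R^4}V\chi^2 U_{\delta,\rho\xi_1}^2\,dx .
\]
Because $V$ is radial and $C^2$ and the bubble concentrates at $|x|=\rho$, I would expand $V(x)=V(\rho)+\mathcal O(|x-\rho\xi_1|)$; the constant part meets the scale-invariant, logarithmically divergent integral $\int\chi^2U_{\delta,\rho\xi_1}^2\,dx\sim C\,\delta^2|\ln\delta|$ (cut off at scale $\sigma$) for an explicit $C$, while the $\mathcal O(|x-\rho\xi_1|)$ correction contributes only $\mathcal O(\delta^2)$. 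Differentiating in $\delta$ yields the leading $-8V(\rho)\delta\ln\delta$.

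The second main term comes from $\mathcal E_3=\beta\chi^3U_{\delta,\rho\xi_1}\sum_{i\ge2}U_{\delta,\rho\xi_i}^2$, whose mechanism is the overlap of the central bubble with the distant ones. Near the peak $\rho\xi_1$ each $U_{\delta,\rho\xi_i}^2$ is slowly varying and comparable to its value $\mathtt c^2\delta^2/(\rho^4|\xi_1-\xi_i|^4)$, so I would freeze it there, integrate $U_{\delta,\rho\xi_1}\partial_\delta U_{\delta,\rho\xi_1}$, and sum over $i$ via \eqref{tau} with $\tau=4$, which produces the factor $m^4/\rho^4$. The delicate point is the logarithm: for the nearest neighbours, whose distance $\rho|\xi_1-\xi_i|\sim\rho/m$ is far smaller than $\sigma$, the radial log-integral saturates already at the inter-bubble scale $\rho/m$ rather than at $\sigma$, which replaces $\ln\delta$ by $\ln(\delta m)$. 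Making this rigorous is where Lemma \ref{app1} enters, to split each product $U_{\delta,\rho\xi_1}U_{\delta,\rho\xi_i}^2$ and control the cross terms uniformly in $i$. The outcome is $\mathfrak b\,\beta\delta^3(\ln\delta m)\,m^4/\rho^4$ for a positive constant $\mathfrak b$, and I regard this logarithmic bookkeeping as the main obstacle of the whole proposition.

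It remains to check that all other contributions fit into the stated remainder. The linear piece $\int\mathcal L(\varphi)Z^0_{\delta,\rho\xi_1}$ is handled exactly as in the proof of Proposition \ref{proplin}: Hölder's inequality together with $\|Z^0_{\delta,\rho\xi_1}\|_{L^{4/3}}\lesssim1$, $\|Z^0_{\delta,\rho\xi_1}\|_{L^4}\lesssim\delta^{-1}$ and $|Z^0_{\delta,\rho\xi_1}|\lesssim U_{\delta,\rho\xi_1}/\delta$ produces $\mathcal O(\|\varphi\|)$ from the main linear terms and, for the coupling term, $\mathcal O(|\beta|\|\varphi\|\delta m^2)$ by the same computation as in \eqref{ee3} and \eqref{tau}. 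The nonlinear piece $\int\mathcal N(\varphi)Z^0_{\delta,\rho\xi_1}$ is estimated term by term from \eqref{n} in the same spirit: the cubic and quadratic self-interactions $\varphi^3$ and $\chi U_{\delta,\rho\xi_1}\varphi^2$ give $\mathcal O(\|\varphi\|^2/\delta)$; the $\beta$-coupled quadratic terms $\beta\varphi\sum\varphi^2(\mathscr R_i x)$ and $\beta\chi U_{\delta,\rho\xi_1}\sum\varphi^2(\mathscr R_i x)$ give $\mathcal O(|\beta|m\|\varphi\|^2/\delta)$; and the mixed terms $\beta\chi^2U_{\delta,\rho\xi_1}\sum U_{\delta,\rho\xi_i}\varphi(\mathscr R_i x)$ contribute $\mathcal O(|\beta|\|\varphi\|^2\delta|\ln\delta|^{1/2}m^2)$, the factor $|\ln\delta|^{1/2}$ coming from an $L^2$-type bubble norm. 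Throughout, the hypotheses $\beta=o(m^{-1})$ and $\delta=o(m^{-1})$ keep these remainders subleading. Collecting the two main terms with this remainder gives \eqref{ex1}.
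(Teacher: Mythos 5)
Your strategy is the same as the paper's: the same splitting into the $\mathcal L$, $\mathcal E$, $\mathcal N$ pieces, the decomposition \eqref{e123}, the same treatment of $\mathcal E_1$ and of the potential term (the paper computes $\int\mathcal E_2Z^0_{\delta,\rho\xi_1}$ directly by rescaling in \eqref{vf} rather than via $\tfrac12\partial_\delta\int V\chi^2U^2_{\delta,\rho\xi_1}$), and essentially the same H\"older bookkeeping for $\int\mathcal N(\varphi)Z^0_{\delta,\rho\xi_1}$ and $\int\mathcal L(\varphi)Z^0_{\delta,\rho\xi_1}$. The genuine gap is in the term you yourself call the heart of the matter, $\int\mathcal E_3Z^0_{\delta,\rho\xi_1}$. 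Freezing $U^2_{\delta,\rho\xi_i}$ at its value near $\rho\xi_1$ and integrating $U_{\delta,\rho\xi_1}\partial_\delta U_{\delta,\rho\xi_1}$ there captures only the region $\{|x-\rho\xi_1|\le\rho|\xi_1-\xi_i|/2\}$, i.e. the paper's $I_1$ in \eqref{mai3}. The region $\{|x-\rho\xi_i|\le\rho|\xi_1-\xi_i|/2\}$ around each distant peak contributes exactly the same amount: there $U^2_{\delta,\rho\xi_i}$ carries its logarithmically divergent mass $\sim\mathtt c^2\delta^2\ln\frac{\rho|\xi_1-\xi_i|}{2\delta}$ while $U_{\delta,\rho\xi_1}\partial_\delta U_{\delta,\rho\xi_1}\approx\mathtt c^2\delta/(\rho^4|\xi_1-\xi_i|^4)$, giving again $\frac{\mathtt c^4\delta^3}{\rho^4|\xi_1-\xi_i|^4}\ln\frac{\rho|\xi_1-\xi_i|}{2\delta}$; this is the paper's $I_2$, computed in \eqref{i2} by a mean-value expansion of $U_{\delta,\rho\xi_1}$ near $\rho\xi_i$ (whence $\mathfrak b=2\mathtt c^4C_4$). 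Your scheme outputs half the constant, and — more seriously — if the complement of the first region is tacitly treated as part of the $\mathcal O(\cdots)$ remainder, the asymptotic identity you assert is false, since the neglected piece is of main order. Note also that Lemma \ref{app1} cannot ``make this rigorous'': it is a pure upper-bound tool that loses constants, so it can serve only for the genuinely subleading regions (the paper's $I_3$, $I_4$ in \eqref{i3}--\eqref{i4}); the sharp constant near each peak requires the pointwise Taylor expansion, as in the paper.

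Two smaller points. First, differentiating in $\delta$ the expansion $\int V\chi^2U^2_{\delta,\rho\xi_1}\,dx= CV(\rho)\delta^2|\ln\delta|+\mathcal O(\delta^2)$ is not legitimate as stated: asymptotic expansions cannot be differentiated term by term. The fix is immediate — bound the correction directly, using $|\partial_\delta U_{\delta,\rho\xi_1}|\lesssim U_{\delta,\rho\xi_1}/\delta$, by $\delta^{-1}\int|V-V(\rho)|\chi^2U^2_{\delta,\rho\xi_1}\,dx\lesssim\delta$, which is in effect what the paper's computation \eqref{vf} does. Second, on the cut-off annulus $\sigma\le|r-r_0|\le2\sigma$ one has $Z^0_{\delta,\rho\xi_1}=\mathcal O(1)$, not $\mathcal O(\delta)$ (indeed $\partial_\delta U_{\delta,\rho\xi_1}\approx\mathtt c|x-\rho\xi_1|^{-2}$ there); your conclusion $\int\mathcal E_1Z^0_{\delta,\rho\xi_1}\,dx=\mathcal O(\delta)$, matching \eqref{q1}, survives because $\mathcal E_1=\mathcal O(\delta)$ and the annulus has fixed volume, but the stated premise would wrongly suggest $\mathcal O(\delta^2)$.
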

\begin{proof}
Notice that 
\begin{align}& \int\limits_{{\mathbb R}^4}(\mathcal L(\varphi)-\mathcal E-\mathcal N(\varphi))Z^0_{\delta, \rho\xi_1}dx\nonumber\\&=-\underbrace{\int\limits_{{\mathbb R}^4}\mathcal E_1Z^0_{\delta, \rho\xi_1}dx}_{:=\mathcal Q_1}+\underbrace{\int\limits_{{\mathbb R}^4}\mathcal E_2Z^0_{\delta, \rho\xi_1}dx}_{:=\mathcal Q_2}-\underbrace{\int\limits_{\mathbb R^4}\mathcal E_3Z^0_{\delta, \rho\xi_1} dx}_{:=\mathcal Q_3}\nonumber\\&\ \ \ -\underbrace{\int\limits_{\mathbb R^4} \mathcal N(\varphi)Z^0_{\delta, \rho\xi_1} dx}_{:=\mathcal Q_4}+\underbrace{\int\limits_{\mathbb R^4} \mathcal L(\varphi)Z^0_{\delta, \rho\xi_1} dx}_{:=\mathcal Q_5}
%:=&\mathcal Q_1-\mathcal Q_2+\mathcal O(\delta k^{2+\tau}).
\label{cla}\end{align}
%We first point out that
%\begin{align*}\bigg\|\frac{\partial U_{\delta, \rho\xi_i}}{\partial \delta}\bigg\|_{L^4(\mathbb R^4)}\lesssim \frac{1}{\delta}, \bigg\|\frac{\partial U_{\delta, \rho\xi_i}}{\partial \delta}\bigg\|_{L^{\frac43}(\mathbb R^4)}\leq C.\end{align*}
where $\mathcal E_1, \mathcal E_2, \mathcal E_3$ are defined in \eqref{e123}.

Let us estimate $\mathcal Q_1$. it is immediate to get
\begin{align}\label{q1}
\mathcal Q_1=\int_{\mathbb R^4}\bigg[\Big( \chi U_{\delta, \rho\xi_i}\Big)^3-\chi U_{\delta, \rho\xi_i}^3+2\nabla \chi\nabla U_{\delta,\rho\xi_1}+\Delta \chi U_{\delta, \rho\xi_1}\bigg]Z^0_{\delta, \rho\xi_1}dx \lesssim \delta.
\end{align}

Let us estimate $\mathcal Q_2$. We obtain
\begin{align}
\mathcal Q_2
%=&\bigg[\int\limits_{\mathbb R^4}V(x)\chi^2U_{\delta, \rho\xi_1}\frac{\partial U_{\delta, \rho\xi_1}}{\partial \delta} dx+\int\limits_{\mathbb R^4}V(x)\chi^2U_{\delta, \rho\xi_1}\sum_{i=2}^k\frac{\partial U_{\delta, \rho\xi_i}}{\partial \delta} dx\bigg]\nonumber\\
%&~~~+\int\limits_{\mathbb R^4}V(x)\chi^2\sum_{i=2}^kU_{\delta, \rho\xi_i}\frac{\partial U_{\delta, \rho\xi_1}}{\partial \delta} dx\bigg]\nonumber\\
%\sim&k\int_{supp\{\chi\}}V(x)U_{\delta, \rho\xi_1}\frac{\partial U_{\delta, \rho\xi_1}}{\partial \delta} dx\nonumber\\
%\sim&k\int_{B_R(\xi_1)}V(x)U_{\delta, \rho\xi_1}\frac{\partial U_{\delta, \rho\xi_1}}{\partial \delta} dx\nonumber\\
=&V(\rho)\int_{|r-r_0|\leq\sigma}U_{\delta, \rho\xi_1}\frac{\partial U_{\delta, \rho\xi_1}}{\partial \delta} dx+\mathcal O(\delta)\nonumber\\
=&V(\rho)\delta\int_{B(0, {\frac{\sigma}{\delta}})}-U(\langle y, \nabla U\rangle +U)dy+\mathcal O(\delta)\nonumber\\
=&V(\rho)\delta\int_{B(0, {\frac{\sigma}{\delta}})}U^2dy+\mathcal O(\delta)\nonumber\\
\sim&-8V(\rho)\delta\ln\delta\label{vf}
\end{align}
where we use ${{\mathtt c}}^2=8>0$ (see \eqref{udx}).

Let us estimate $\mathcal Q_3$. We claim
\begin{align}
&\int\limits_{\mathbb R^4}\mathcal E_3Z^0_{\delta, \rho\xi_1} dx\nonumber\\
&\sim \beta\sum_{i=2}^m \int_{|r-r_0|\leq \sigma}U_{\delta, \rho\xi_1}\frac{\partial U_{\delta, \rho\xi_1}}{\partial \delta}U^2_{\delta, \rho\xi_i}dx\nonumber\\
&\sim\beta\sum_{i=2}^m\underbrace{\int\limits_{|x-\rho\xi_1|\leq \frac{\rho|\xi_1-\xi_i|}{2}}\dots}_{I_1}+\underbrace{\int\limits_{|x-\rho\xi_i|\leq \frac{\rho|\xi_1-\xi_i|}{2}}\dots}_{I_2}+\underbrace{\int\limits_{|x-\rho\xi_1|\geq \frac{\rho|\xi_1-\xi_i|}{2}\atop
\frac{\rho|\xi_1-\xi_i|}{2}\leq |x-\rho\xi_i|\leq 2\rho|\xi_1-\xi_i|
}\dots }_{I_3}+\underbrace{\int\limits_{|x-\rho\xi_1|\ge \frac{\rho|\xi_1-\xi_i|}{2}\atop
|x-\rho\xi_i|\ge 2\rho|\xi_1-\xi_i|
}\dots }_{I_4}\nonumber\\
&\sim 2\beta\sum_{i=2}^m\frac{{\mathtt c}^4\delta^3}{\rho^4|\xi_1-\xi_i|^4}\ln\frac{\rho|\xi_1-\xi_i|}{2\delta}+\mathcal O\(\sum_{i=2}^m\frac{\delta^3|\beta|}{\rho^4|\xi_1-\xi_i|^4}\)\nonumber\\
&\sim -\mathfrak b\beta\delta^3(\ln\delta m)\frac{ m^4}{\rho^4}\label{main}
\end{align}
where $\mathfrak b=2{\mathtt c}^4C_4$(see \eqref{tau}), since $\sum\limits_{i=2}^m\frac{1}{|\xi_1-\xi_i|^4}\ln(m|\xi_1-\xi_i|)=\mathcal O(m^4)$.

Indeed, on one hand, by the Lagrange Mean Value Theorem,  for any $y\in B(0, \frac{\rho|\xi_i-\xi_j|}{2\delta }), \xi_i\neq \xi_j$, we have
\begin{align*}& U_{\delta, \rho\xi_j}(\delta y+\rho\xi_i)= {\mathtt c} \frac{\delta}{\rho^2|\xi_j-\xi_i|^2}\bigg[1-\frac{2\delta\rho\langle y, \xi_i-\xi_j\rangle}{\rho^2|\xi_i-\xi_j|^2}+\mathcal O\bigg(\frac{\delta^2(1+|y|^2)}{\rho^2|\xi_j-\xi_i|^2}\bigg)\bigg]\text{~uniformly},\end{align*}
so \begin{align}
I_1&\sim  {\mathtt c}^2\delta^2\frac{1}{\rho^4|\xi_i-\xi_1|^4} \int_{B(\rho\xi_1,\frac{\rho|\xi_1-\xi_i|}{2})}U_{\delta, \rho\xi_1}\frac{\partial U_{\delta, \rho\xi_1}}{\partial \delta} dx\nonumber\\
%&\sim  {\mathtt c}^2\delta^3\sum_{i=2}^m \frac{1}{\rho^4|\xi_i-\xi_1|^4} \int_{B(0,\frac{\rho|\xi_1-\xi_i|}{2\delta})}-U(\langle y\cdot \nabla U\rangle+ U) dx\nonumber\\
%&\sim \beta {\mathtt c}^2\delta^3\sum_{i=2}^m\frac{1}{|\xi_i-\xi_1|^4} \int_{B(0,\frac{\vartheta}{\delta m})}U^2 dx\nonumber\\
&\sim  {\mathtt c}^2\delta^3\frac{1}{\rho^4|\xi_i-\xi_1|^4}\int_{B(0,\frac{\rho|\xi_1-\xi_i|}{2\delta})}U^2 dy\nonumber\\
&\sim \frac{{\mathtt c}^4\delta^3}{\rho^4|\xi_1-\xi_i|^4}\ln\frac{\rho|\xi_1-\xi_i|}{2\delta}.\label{mai3}
\end{align}
 And one can check that \begin{align}
I_2&={\mathtt c}^4\int\limits_{|x-\rho\xi_i|\le \frac{\rho|\xi_i-\xi_1|}2}\delta^3{|x-\rho\xi_1|^2-\delta^2\over\(|x-\rho\xi_1|^2+\delta^2\)^3}{1\over\(|x-\rho\xi_i|^2+\delta^2\)^2}dx\nonumber\\
&\hbox{set}\ x=\delta y+\rho\xi_i\nonumber\\
&={\mathtt c}^4\int\limits_{|y|\le \frac{\rho|\xi_i-\xi_1|}{2\delta}}\delta^3{|\delta y+\rho\xi_i-\rho\xi_1|^2-\delta^2\over\(|\delta y+\rho\xi_i-\rho\xi_1|^2+\delta^2\)^3}{1\over\(|y|^2+1\)^2}dy\nonumber\\
&={\mathtt c}^4\int\limits_{|y|\le \frac{\rho|\xi_i-\xi_1|}{2\delta}}\delta^3{ 1\over\(|y|^2+1\)^2}{1\over | \rho\xi_1-\rho\xi_i|^4}dy\nonumber\\ &+
{\mathtt c}^4 \int\limits_{|y|\le \frac{\rho|\xi_i-\xi_1|}{2\delta}}\delta^3{ 1\over\(|y|^2+1\)^2}\({|\delta y+\rho\xi_i-\rho\xi_1|^2-\delta^2\over\(|\delta y+\rho\xi_i-\rho\xi_1|^2+\delta^2\)^3}-{1\over |\rho\xi_1-\rho\xi_i|^4}\)dy\nonumber\\
&\sim\frac{{\mathtt c}^4\delta^3}{\rho^4|\xi_1-\xi_i|^4}\ln\frac{\rho|\xi_1-\xi_i|}{2\delta}+\mathcal O\Big(\frac{\delta^3}{\rho^4|\xi_1-\xi_i|^4}\Big)\label{i2}
\end{align}
since  it holds from $|\theta\delta y+\rho\xi_i-\rho\xi_1|\geq c\rho|\xi_1-\xi_i|$ for some $\theta\in(0, 1), c>0$,
\begin{align*}&{|\delta y+\rho\xi_i-\rho\xi_1|^2-\delta^2\over\(|\delta y+\rho\xi_1-\rho\xi_i|^2+\delta^2\)^2}-{1\over |\rho\xi_1-\rho\xi_i|^4}\\&= \delta {\theta\delta+\langle\theta\delta y+\rho\xi_1-\rho\xi_i,y\rangle\over\(|\theta\delta y+\rho\xi_1-\rho\xi_i|^2+(\theta\delta)^2\)^3}\\
 &\lesssim {\delta^2\over(\rho|\xi_1-\xi_i|)^6}+{\delta^2|y|^2+\delta\rho|\xi_1-\xi_i||y|\over(\rho|\xi_1-\xi_i|)^6}.\end{align*}

On the other hand, it shows from direct calculations that
\begin{align}
|I_3|\lesssim& \frac{\delta}{\rho^4|\xi_1-\xi_i|^4}\int\limits_{
\frac{\rho|\xi_1-\xi_i|}{2}\leq |x-\rho\xi_i|\leq 2\rho|\xi_1-\xi_i|
}\frac{\delta^2}{(\delta^2+|x-\rho\xi_i|^2)^2}dx\nonumber\\
\lesssim& \frac{\delta^3}{\rho^4|\xi_1-\xi_i|^4}\Big(\ln \frac{2\rho|\xi_1-\xi_i|}{\delta}-\ln\frac{\rho|\xi_1-\xi_i|}{2\delta}\Big)\nonumber\\
\lesssim&\frac{\delta^3}{\rho^4|\xi_1-\xi_i|^4}\label{i3}\end{align}
and \begin{align}|I_4|
&\lesssim \int\limits_{|x-\rho\xi_1|\ge \frac{|\rho\xi_1-\rho\xi_i|}{2}\atop
|x-\rho\xi_i|\ge 2|\rho\xi_1-\rho\xi_i|
}\delta^3{1\over |x-\rho\xi_1|^4}{1\over |x-\rho\xi_i|^4}dx\nonumber\\
&\hbox{set}\ x=\rho|\xi_1-\xi_i| y+\rho\xi_i\nonumber\\
&\lesssim   \int\limits_{|y|\ge 2\atop
|y+(\xi_1-\xi_i)/|\xi_1-\xi_i||\ge \frac{|y|}{2}}\delta^3{1\over |y|^4}{1\over  |y+(\xi_1-\xi_i)/|\xi_1-\xi_i||^4}dy\nonumber\\
&\lesssim \delta^3.\label{i4}\end{align}
Combining \eqref{mai3}-\eqref{i4}, then \eqref{main} as desired.
%{\color{blue} Denote $\eta_i=\frac{\xi_i}{\delta}$, notice that for any $0<\alpha\leq\min\{\alpha_1, \alpha_2\}, i\neq j,$ it holds
%$$\begin{aligned}&\frac{1}{(1+|x-x_i|)^{\alpha_1}}\frac{1}{(1+|x-x_j|)^{\alpha_2}}\\
%&~~~~~\lesssim \frac{1}{|x_i-x_j|^\alpha}\(\frac{1}{(1+|x-x_i|)^{\alpha_1+\alpha_2-\alpha}}+\frac{1}{(1+|x-x_j|)^{\alpha_1+\alpha_2-\alpha}}\)\end{aligned}$$
%so we have
%$$\begin{aligned}
%&\int_{|r-r_0|\leq \sigma}U_{\delta, \rho\xi_1}\frac{\partial U_{\delta, \rho\xi_1}}{\partial \delta}\sum_{i=2}^mU^2_{\delta, \rho\xi_i}dx\\
%&\lesssim \frac{1}{\delta}\int_{|r-r_0|\leq \frac{\sigma}{\delta}}\sum_{i=2}^m\frac{1}{(1+|y-\eta_1|)^4}\frac{1}{(1+|y-\eta_i|)^4}dy\\
%&\lesssim\frac{1}{\delta}\int_{|r-r_0|\leq \frac{\sigma}{\delta}}\sum_{i=2}^m\frac{1}{|\eta_1-\eta_i|^4}\bigg(\frac{1}{(1+|y-\eta_1|)^4}+\frac{1}{(1+|y-\eta_i|)^4}\bigg)dy\\
%&\lesssim \delta^3m^4|\ln \delta|
%\end{aligned}$$}

Let us estimate $\mathcal Q_4$. Indeed,
\begin{align}
\int\limits_{\mathbb R^4}\mathcal N(\varphi)Z^0_{\delta, \rho\xi_1}dx\lesssim&\frac{\|\varphi\|^3}{\delta}+\frac{|\beta|m\|\varphi\|^3}{\delta}+\frac{\|\varphi\|^2}{\delta}+ \frac{|\beta|m\|\varphi\|^2}{\delta}\nonumber\\
&+|\beta|\|\varphi\|^2\sum_{i=2}^m\bigg(\int\limits_{\mathbb R^4}\Big(\chi U_{\delta, \rho\xi_i}Z^0_{\delta, \rho\xi_1}\Big)^2dx\bigg)^{\frac12}\nonumber\\
&+|\beta|\|\varphi\|\sum_{i=2}^m \bigg(\int\limits_{\mathbb R^4}\Big(\chi^2 U_{\delta, \rho\xi_1} U_{\delta, \rho\xi_i}Z^0_{\delta, \rho\xi_1}\Big)^{\frac43}dx\bigg)^{\frac34}\nonumber \\
\lesssim &\frac{|\beta|m\|\varphi\|^2}{\delta}+\frac{\|\varphi\|^2}{\delta}+|\beta|\|\varphi\|^2\delta|\ln\delta |^{\frac12}m^{2}+|\beta|\|\varphi\|\delta m^{2}\label{ne}
\end{align}
since from \eqref{jen01} and Lemma \ref{app1}
\begin{align*}
&\sum_{i=2}^m\bigg(\int\limits_{\mathbb R^4}\Big(\chi U_{\delta, \rho\xi_i}Z^0_{\delta, \rho\xi_1}\Big)^2dx\bigg)^{\frac12}\\
&\lesssim\sum_{i=2}^m \bigg(\frac{1}{\delta^2}\int_{|r-r_0|\leq\frac{\sigma}{\delta}}\frac{1}{\rho^4|\eta_1-\eta_i|^4}\Big(\frac{1}{(1+|y-\rho\eta_1|)^4}+\frac{1}{(1+|y-\rho\eta_i|)^4}\Big) dy\bigg)^{\frac12}\\
%&\lesssim \int_{B(\xi_1, \frac{\vartheta}{m})}\sum_{i=2}^mU_{\delta, \rho\xi_i}^2 \Big(\frac{\partial U_{\delta, \rho\xi_1}}{\partial \delta}\Big)^2dx+\int_{\cup_{i=2}^mB(\xi_i, \frac{\vartheta}{m})}\sum_{i=2}^m U_{\delta, \rho\xi_i}^2\Big(\frac{\partial U_{\delta, \rho\xi_1}}{\partial \delta}\Big)^2dx\\
%&\lesssim  \int_{B(\xi_1, \frac{\vartheta}{m})}\sum_{i=2}^mU_{\delta, \rho\xi_i}^2 \Big(\frac{\partial U_{\delta, \rho\xi_1}}{\partial \delta}\Big)^2dx+\sum_{i=2}^m\int_{B(\xi_i, \frac{\vartheta}{m})} U_{\delta, \rho\xi_i}^2\Big(\frac{\partial U_{\delta, \rho\xi_1}}{\partial \delta}\Big)^2dx\\
%&\lesssim \frac{1}{\delta^2}\cdot \sum_{i=2}^m\frac{\delta^4}{\rho^4|\xi_i-\xi_1|^4} \int_{B(0, \frac{\vartheta}{\delta m})}\frac{1}{(1+|y|^2)^2} dy+ \delta^6m^8\cdot \frac{1}{\delta^4m^4}\\
&\lesssim \delta m^2|\ln\delta|^{\frac12}
\end{align*}
and 
\begin{align*}
&\sum_{i=2}^m\bigg(\int\limits_{\mathbb R^4}\Big(\chi^2 U_{\delta, \rho\xi_1} U_{\delta, \rho\xi_i}Z^0_{\delta, \rho\xi_1}\Big)^{\frac43}dx\bigg)^{\frac34}\\
&\lesssim\sum_{i=2}^m\bigg(\frac{1}{\delta^{\frac43}}\int_{|r-r_0|\leq\frac{\sigma}{\delta}}\frac{1}{\rho^{\frac83}|\eta_1-\eta_i|^{\frac83}}\Big(\frac{1}{(1+|y-\rho\eta_1)^{\frac{16}{3}}}+\frac{1}{(1+|y-\rho\eta_i)^{\frac{16}{3}}}\Big)dy\bigg)^{\frac34}\nonumber\\
&\lesssim\delta m^2.
%&\lesssim  \int_{B(\xi_1, \frac{\vartheta}{m})}\sum_{i=2}^mU_{\delta, \rho\xi_i}^{\frac43} \Big(U_{\delta, \rho\xi_1}\frac{\partial U_{\delta, \rho\xi_1}}{\partial \delta}\Big)^{\frac43}dx+\sum_{i=2}^m\int_{B(\xi_i, \frac{\vartheta}{m})} U_{\delta, \rho\xi_i}^{\frac43}\Big(U_{\delta, \rho\xi_1}\frac{\partial U_{\delta, \rho\xi_1}}{\partial \delta}\Big)^{\frac43}dx\\
%&\lesssim \frac{1}{\delta^{\frac43}}\cdot \delta^{\frac83}m^{\frac83}+\frac{1}{\delta^{\frac43}}\cdot \delta^{\frac{16}{3}}m^{\frac{16}{3}}\cdot \delta^{-\frac43}m^{-\frac43}\\
%&\lesssim \delta^{\frac43}m^{\frac83}+\delta^{\frac83}m^{4}.
\end{align*}

Let us estimate $\mathcal Q_5$. We derive from step 2 in Proposition \ref{proplin}
\begin{align}\label{le}
&\int\limits_{\mathbb R^4}(-\Delta\varphi+V(x)\varphi-3(\chi U_{\delta, \rho\xi_1})^2\varphi-\beta\varphi \sum_{i=2}^m(\chi U_{\delta, \rho\xi_i})^2)Z^0_{\delta, \rho\xi_1}dx
%&\lesssim \int\limits_{\mathbb R^4}(-\Delta\varphi-3W^2\varphi)Z^0_{\delta, \rho\xi_1}dx\nonumber\\
%%&=\int\limits_{\mathbb R^4}\varphi\(\frac{\partial(-\Delta\chi  U_{\delta, \rho\xi_1}-2\nabla \chi\nabla U_{\delta, \rho\xi_1}+\chi U^3_{\delta, \xi_1})}{\partial \delta}-3W^2Z^1_{\delta, \rho\xi_1}\)\nonumber\\
%&\lesssim\bigg|\int\limits_{\mathbb R^4}\left[(3\chi U_{\delta, \rho\xi_1}^2\frac{\partial U_{\delta, \rho\xi_1}}{\partial \delta}-3\chi^3U_{\delta, \rho\xi_1}^2\frac{\partial U_{\delta, \rho\xi_1}}{\partial \delta})\varphi\right]+\left[3(\chi^2U_{\delta, \rho\xi_1}^2-W^2)\chi\frac{\partial U_{\delta, \rho\xi_1}}{\partial \delta}\varphi \right]dx\nonumber\\&\ \ \ \ \ \ \ \ +
%\int\limits_{\mathbb R^4}(-\Delta \chi)\frac{\partial U_{\delta, \rho\xi_1}}{\partial\delta}\varphi dx-\int\limits_{\mathbb R^4}2\nabla \chi \nabla\frac{\partial U_{\delta, \rho\xi_1}}{\partial\delta}\varphi dx\bigg|\nonumber\\
\lesssim \|\varphi\|+ |\beta|\|\varphi\|\delta m^{2}.
\end{align}
Therefore one can end with \eqref{cla}-\eqref{main}, \eqref{ne}, \eqref{le} immediately.
%\begin{align*}
%\int\limits_{\mathbb R^4}V(x)\varphiZ^0_{\delta, \rho\xi_1}dx\lesssim \|\varphi\|\|Z^0_{\delta, \rho\xi_1}\|_{L^{\frac43}(\mathbb R^4)}\lesssim \|\varphi\|,
%\end{align*}
%and 
%\begin{align*}
%\int\limits_{\mathbb R^4}\beta\varphi \sum_{i=2}^mW^2(\mathscr R_ix)Z^0_{\delta, \rho\xi_1}dx\lesssim  |\beta|\|\varphi\|(\int\limits_{\mathbb R^4}(W^2(\mathscr R_ix)Z^0_{\delta, \rho\xi_1})^{\frac43}dx)^{\frac34} \|\lesssim 
%\end{align*}
%since from \eqref{11}
%\begin{align*}
%\int\limits_{\mathbb R^4} \sum_{i=2}^mW^2(\mathscr R_ix)Z^0_{\delta, \rho\xi_1}dx&\lesssim  |\beta|\|\varphi\|(\int\limits_{\mathbb R^4}(W^2(\mathscr R_ix)Z^0_{\delta, \rho\xi_1})^{\frac43}dx)^{\frac34}\\
%&\lesssim  |\beta|\|\varphi\|\frac{1}{\delta}(\int\limits_{\mathbb R^4}(W^2(\mathscr R_ix)W)^{\frac43}dx)^{\frac34}\\
%&\lesssim \frac{1}{\delta}\cdot (\delta^{3}m^3+\delta^{2}m^{\frac{11}{4}})\\
%&\lesssim \delta^2m^3+\delta m^{\frac{11}{4}}.
%\end{align*}
 \end{proof}
%Now, let us compute the first order term in  the  expansion of the L.H.S. of \eqref{po2}.

 \begin{proposition}Denote $D_{\varepsilon}:=\{|r-r_0|\leq \varepsilon, \varepsilon\in (2\sigma, 5\sigma)\}$. It holds true that
 \begin{align}
%&\int\limits_{\mathbb R^4}(\mathcal L(\varphi)-\mathcal E-\mathcal N(\varphi))Z^0_{\delta, \rho\xi_1} dx\nonumber\\
%&~~~~~=k(-\mathfrak a\delta\ln \delta V( r  , \bar{x}_3, \bar{x}_4)-\mathfrak b \delta\sum\limits_{i=2}^k\frac{1}{|\xi_i-\xi_1|^2}+o(\delta|\ln\delta|)),\label{pooo1}\\
&\int\limits_{D_\varepsilon}(\mathcal L(\varphi)-\mathcal E-\mathcal N(\varphi))\langle x, \nabla u\rangle dx\nonumber\\
&\ \ \ \ \ \ =\frac{1}{\rho  }\frac{\partial ( \rho  ^2V( \rho))}{\partial  \rho  }4 \delta^2\ln\delta+\mathcal O(\delta^2+\|\varphi\|^2+|\beta|m\|\varphi\|^4)\label{pooo2}
 \end{align}
where $\mathcal L(\varphi), \mathcal E, \mathcal N(\varphi)$ are defined in \eqref{l}-\eqref{n}.
 \end{proposition}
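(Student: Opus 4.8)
The starting point is the observation that, by the very definitions \eqref{l}--\eqref{n}, the combination $\mathcal L(\varphi)-\mathcal E-\mathcal N(\varphi)$ is nothing but the original operator evaluated at $u=\chi U_{\delta,\rho\xi_1}+\varphi$, namely
\begin{equation*}
\mathcal L(\varphi)-\mathcal E-\mathcal N(\varphi)=-\Delta u+V(x)u-u^3-\beta u\sum_{i=2}^m u^2(\mathscr R_i x)=:S(u).
\end{equation*}
Thus the left-hand side of \eqref{pooo2} is the localized Pohozaev quantity $\int_{D_\varepsilon}S(u)\langle x,\nabla u\rangle\,dx$ associated with the dilation generator $\langle x,\nabla\cdot\rangle$. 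The plan is to integrate by parts each term and exploit the conformal invariance of the critical pair in dimension four, so that the leading contribution is carried entirely by the potential term and reorganizes into the divergence form $\tfrac1r\partial_r(r^2V)$.

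First I would treat the conformal part. Integrating $\int_{D_\varepsilon}(-\Delta u)\langle x,\nabla u\rangle\,dx$ and $\int_{D_\varepsilon}u^3\langle x,\nabla u\rangle\,dx$ by parts (with $n=4$) produces the bulk expression $-\int_{D_\varepsilon}|\nabla u|^2\,dx+\int_{D_\varepsilon}u^4\,dx$ together with boundary integrals over $\partial D_\varepsilon$. Since $\varepsilon>2\sigma$ we have $\chi\equiv0$ on $\partial D_\varepsilon$, so there $u=\varphi$, while $U_{\delta,\rho\xi_1}$ and its gradient are only of size $\mathcal O(\delta^2)$; hence every boundary integral is $\mathcal O(\delta^2+\|\varphi\|^2)$. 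To handle the bulk term I would test the equation $S(u)=\sum_l\mathfrak c_l(\chi U_{\delta,\rho\xi_1})^2 Z_{\delta,\rho\xi_1}^l$ with $u$ on $D_\varepsilon$, which gives
\begin{equation*}
-\int_{D_\varepsilon}|\nabla u|^2\,dx+\int_{D_\varepsilon}u^4\,dx=\int_{D_\varepsilon}V(x)u^2\,dx+\mathcal O\Big(\delta^2+\|\varphi\|^2+|\beta| m\|\varphi\|^4\Big),
\end{equation*}
the orthogonality relations in \eqref{exi} and the smallness of $\mathfrak c_l$ absorbing the projection term, while the nonlocal contribution is estimated exactly as in \eqref{ee3} via Lemma \ref{app1} and \eqref{tau}.

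Next I would add the genuine potential term. Writing $\int_{D_\varepsilon}V u\langle x,\nabla u\rangle\,dx=\tfrac12\int_{D_\varepsilon}V\,\langle x,\nabla u^2\rangle\,dx$ and integrating by parts yields $-\tfrac12\int_{D_\varepsilon}\big(r V'(r)+4V\big)u^2\,dx$ up to a boundary term that is again negligible because $\chi$ vanishes on $\partial D_\varepsilon$. Combining this with the previous display, the two $V$-contributions collapse into the divergence form
\begin{equation*}
\int_{D_\varepsilon}V u^2\,dx-\tfrac12\int_{D_\varepsilon}\big(rV'(r)+4V\big)u^2\,dx=-\tfrac12\int_{D_\varepsilon}\frac1r\frac{\partial(r^2V)}{\partial r}\,u^2\,dx.
\end{equation*}
Since $u^2\sim\chi^2U^2_{\delta,\rho\xi_1}$ concentrates at $\rho\xi_1$, where $r\sim\rho$, a Taylor expansion of the smooth coefficient together with the mass estimate $\int_{D_\varepsilon}u^2\,dx\sim\delta^2\int_{B(0,\sigma/\delta)}U^2\,dy\sim-8\delta^2\ln\delta$ (the very integral already used in \eqref{vf}) produces the main term $\tfrac1\rho\frac{\partial(\rho^2V)}{\partial\rho}\,4\delta^2\ln\delta$, with replacement errors of order $\delta^2+\|\varphi\|^2$.

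The delicate point, and the step I expect to be the main obstacle, is the passage that turns $-\int|\nabla u|^2+\int u^4$ into $\int Vu^2$ plus an admissible remainder. This requires simultaneously controlling the linear-in-$\varphi$ corrections (using the two orthogonality conditions and $\mathfrak c_l=o(1)$), the cut-off errors generated by $\nabla\chi$ and $\Delta\chi$, and the nonlocal coupling $\beta\int_{D_\varepsilon}u^2\sum_{i\ge2}u^2(\mathscr R_ix)$, whose summation over the $m-1$ components must be bounded by $|\beta|m\|\varphi\|^4$ through the Jensen inequality \eqref{jen01}, Lemma \ref{app1} and \eqref{tau}. Once these remainders are shown to fit inside $\mathcal O(\delta^2+\|\varphi\|^2+|\beta|m\|\varphi\|^4)$, the estimate \eqref{pooo2} follows. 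A secondary care is needed to ensure the boundary integrals over $\partial D_\varepsilon$ are genuinely of lower order, which is precisely why the radius $\varepsilon$ is taken in $(2\sigma,5\sigma)$, so that $\chi$ and all its derivatives vanish on $\partial D_\varepsilon$.
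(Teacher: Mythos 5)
Your reduction of $\mathcal L(\varphi)-\mathcal E-\mathcal N(\varphi)$ to the full operator $S(u)$ and your treatment of the local terms ($-\Delta u$, $Vu$, $u^3$) follow the paper's route, but the proposal breaks down on the nonlocal coupling, in two places that are really one missing idea. First, the term $-\beta u\sum_{i\ge2}u^2(\mathscr R_ix)\langle x,\nabla u\rangle$ is part of $S(u)\langle x,\nabla u\rangle$, yet it never appears in your integration-by-parts plan: you expand only the conformal part and the potential part, so this contribution simply disappears from your computation without justification. Second, when you test $S(u)$ with $u$ on $D_\varepsilon$, the resulting identity is
\begin{equation*}
-\int\limits_{D_\varepsilon}|\nabla u|^2\,dx+\int\limits_{D_\varepsilon}u^4\,dx=\int\limits_{D_\varepsilon}V u^2\,dx-\beta\int\limits_{D_\varepsilon}u^2\sum_{i=2}^m u^2(\mathscr R_ix)\,dx+\cdots,
\end{equation*}
and you claim the nonlocal piece is controlled ``as in \eqref{ee3}'' and bounded by $|\beta|m\|\varphi\|^4$. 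That is false: $u$ contains the bubble, and the bubble--bubble part $\beta\int_{D_\varepsilon}\chi^4U^2_{\delta,\rho\xi_1}\sum_i U^2_{\delta,\rho\xi_i}\,dx$ is of size $|\beta|\delta^4m^4|\ln(\delta m)|$ by Lemma \ref{app1} and \eqref{tau} (both concentration regions lie inside $D_\varepsilon$). Under the scaling of Theorem \ref{thm1.1}, $|\beta|\delta^2m^4\sim d^2$, this is of order $\delta^2\ln m$, i.e.\ exactly the order of the main term $4\delta^2\ln\delta\,\frac1\rho\partial_\rho(\rho^2V(\rho))$, whereas the allowed remainder satisfies $\delta^2+\|\varphi\|^2+|\beta|m\|\varphi\|^4\lesssim\delta^2$. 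So this term cannot be discarded, and indeed keeping it would alter the reduced equation for $\rho$.

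The missing idea, which is what the paper (following Proposition 2.10 of \cite{CPV}) uses, is a symmetrization identity for the nonlocal Pohozaev term: since $\chi$ is radial, $D_\varepsilon$ is rotation invariant and the rotations $\mathscr R_i$ form a group, integrating by parts and changing variables $y=\mathscr R_ix$ gives
\begin{equation*}
-\beta\int\limits_{D_\varepsilon}u\sum_{i=2}^m u^2(\mathscr R_ix)\langle x,\nabla u\rangle\,dx=\beta\int\limits_{D_\varepsilon}u^2\sum_{i=2}^m u^2(\mathscr R_ix)\,dx+\mathcal O\big(|\beta|m\|\varphi\|^4\big),
\end{equation*}
where the error comes from boundary integrals on which $u=\varphi$; this, not any bulk estimate, is the origin of the $|\beta|m\|\varphi\|^4$ in \eqref{pooo2}. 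This term then cancels \emph{exactly} the nonlocal energy produced by testing with $u$, and only after this cancellation is the remainder admissible. In your write-up the two large terms (each of order $\delta^2\ln m$) are discarded separately, so the final formula comes out right only because two unjustified omissions compensate; as a proof it does not stand. A minor further point: boundary integrals of $\varphi$ on a fixed sphere are not bounded by $\|\varphi\|^2$; one needs a mean-value choice of the radius, which is the actual reason $\varepsilon$ ranges over the interval $(2\sigma,5\sigma)$ rather than being fixed beyond $2\sigma$.
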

 \begin{proof}
By the similar arguments as Proposition 2.10 of \cite{CPV}, we immediately get
 \begin{align}&\int\limits_{D_\varepsilon}(-\Delta u+V(x)u-u^3-\beta u \sum_{i=2}^m u^2(\mathscr R_ix))\langle x, \nabla u\rangle dx\nonumber\\
&=\int\limits_{D_\varepsilon}(-\Delta u+V(x)u-u^3)\langle x, \nabla u\rangle dx +\int\limits_{D_\varepsilon}\beta u^2\sum_{i=2}^mu^2(\mathscr R_ix)dx+\mathcal O(\beta m \|\varphi\|^4)\nonumber\\
 &=\int\limits_{D_{\varepsilon}}-|\nabla u|^2- V(x)u^2-(V(x) + \frac12\langle \nabla V,  x\rangle)u^2+u^4\nonumber\\
 &\ \ \ \ \ \ \ \ \ \ \ +\beta u^2\sum_{i=2}^mu^2(\mathscr R_ix)dx+\mathcal O(\|\varphi\|^2+\beta m\|\varphi\|^4)\nonumber\\
&=\int\limits_{D_\varepsilon}-(V(x) + \frac12\langle \nabla V(x),  x\rangle)u^2dx+\mathcal O(\|\varphi\|^2+\beta m\|\varphi\|^4)+o(\delta^2)\nonumber\\
&=\int\limits_{D_\varepsilon}-\frac{1}{2r}\frac{\partial (r^2V(r))}{\partial r}u^2dx+\mathcal O(\|\varphi\|^2+\beta m\|\varphi\|^4)+o(\delta^2)\nonumber\\
&=\frac{1}{\rho}\frac{\partial (\rho^2V(\rho))}{\partial \rho} 4\delta^2\ln\delta +\mathcal O(\delta^2+\|\varphi\|^2+\beta m\|\varphi\|^4) \nonumber
%&\int_{D_\varepsilon}(-\Delta u+V(x)u-u^3-\beta u \sum_{i=2}^m u^2(\mathscr R_ix))\frac{\partial u}{\partial x_l} dx\nonumber\\&=-\frac12\int_{D_\varepsilon}\frac{\partial V(|x|)}{\partial x_l}u^2dx+\mathcal O\(\int_{\partial D_\varepsilon}|\nabla \varphi|^2+|\varphi|^2+|\varphi|^4\), l=3,4. \label{equ2}
\end{align}
where we use $\frac12{\mathtt c}^2=4$. Thus we are done.
 \end{proof}

 \subsection{Proof of Theorem \ref{thm1.1}: completed}
Similar to Proposition 2.8 in \cite{CPV}, taking advantage of \eqref{ex1} and \eqref{pooo2}, the problem reduces to find $(\delta, \rho)$ such that 
$$\left\{ \begin{aligned}
 &-8\delta\ln\delta  V(\rho)+\mathfrak b\beta\delta^3\ln (\delta m)\frac{ m^4}{\rho^4} =\mathcal O\bigg(\delta+\|\varphi\|^2\Big(\frac{|\beta|m}{\delta}+\frac{1}{\delta}\\
 &\ \ \ \ \ \ \ \ \ \ \ \ \ \ \ \ \ \ \ \ \ \ \ \ \ \ \ \ \ \ \ \ \ \ \ +|\beta|\delta|\ln\delta|^{\frac12}m^{2}\Big)+\|\varphi\|(|\beta|\delta m^{2}+1)\bigg),\\
 &4\delta^2\ln\delta  \frac{1}{\rho}\frac{\partial(\rho ^2V(\rho))}{\partial \rho } =\mathcal O(\delta^2+\|\varphi\|^2+\beta m\|\varphi\|^4).
 \end{aligned}\right.$$
 Let us choose $\beta\sim\frac{\mathfrak e}{m^{\alpha}} $ with $ 1<\alpha<2$ for some constant $\mathfrak e$ and $\delta^2= \frac{\tilde d_m}{\beta m^4}$ for some $\tilde d_m\neq 0 $, we have $\|\varphi\|\lesssim  \frac{1}{m^{\frac74}}+\delta\lesssim \delta$, then it can be verified that
\begin{equation*}\left\{\begin{aligned}&\mathcal O\bigg(\delta+\|\varphi\|^2\Big(\frac{|\beta|m}{\delta}+\frac{1}{\delta}+|\beta|\delta|\ln\delta|^{\frac12}m^{2}\Big)+\|\varphi\|(|\beta|\delta m^{2}+1)\bigg)=o(\delta|\ln\delta|),\\
&\mathcal O(\delta^2+\|\varphi\|^2+\beta m\|\varphi\|^4)=o(\delta^2|\ln\delta|).
\end{aligned}\right.\end{equation*}

Since $V(r_0)$ and $\beta$ have the same sign, it is equivalent to find $\tilde d=\tilde d_m\neq 0$  and $\rho=\rho_m>0$
 solutions of
 \begin{equation}\label{last}
\left\{ \begin{aligned}
&\tilde{\mathfrak a}V(\rho) -\frac{\tilde{\mathfrak b} \tilde d}{\rho^4}=o(1),\\
& \frac{\partial(\rho^2V(\rho))}{\partial \rho}=o(1).
\end{aligned}\right.\end{equation}
In the end, since $r_0$ is a non-degenerate critical point of the function $r^2V(r)$ and $V(r_0)\neq 0$,
the problem \eqref{last} has a solution $(\tilde d_m, \rho_m)$ and $\sim \frac{\tilde{\mathfrak a}r^4_0V(r_0)}{\tilde{\mathfrak b}}\neq 0,
 \rho_m\sim r_0$ as $m$ is large enough. Taking $d=\sqrt{|\tilde d_m|}$, the proof is completed.
 \qed


\begin{thebibliography}{99}
%\bibitem{BJS}
%T. Bartsch, L. Jeanjean, N. Soave, Normalized solutions for a system of coupled cubic Schrödinger equations on $\mathbb R^3$, J. Math. Pures Appl. 106 (2016), 583--614.

\bibitem{BS}
K. J. Brown, N. Stavrakakis, Global bifurcation results for a semilinear elliptic equation on all of ${\mathbb R}^N$. Duke Math. J. 85 (1996), 77--94. 
 \bibitem{bmw}
J. Byeon, S.-H. Moon, Z.-Q. Wang, Nonlinear Schrödinger systems with mixed interactions: locally minimal energy vector solutions. Nonlinearity 34 (2021), 6473--6506.
\bibitem{CMP}
H. Chen, M. Medina, A. Pistoia, Segregated solutions for a critical elliptic system with a small interspecies repulsive force. arXiv: 2203.10990v2.



\bibitem{CPV}
H. Chen, A. Pistoia, G. Vaira, Segregated solutions for some non-linear Schr\"odinger systems with critical growth. Discrete Contin. Dyn. Syst. 23 (2022), 482-506.
\bibitem{CP} M. Clapp, A. Pistoia,  Existence and phase separation of entire solutions to a pure critical competitive elliptic system. Calc. Var. Partial Differential Equations 57 (2018), Paper No. 23, 20 pp. 

\bibitem{CS}
M. Clapp, A. Szulkin,  A simple variational approach to weakly coupled competitive elliptic systems. NoDEA Nonlinear Differential Equations Appl. 26 (2019), Paper No. 26, 21 pp.

\bibitem{CTV} M. Conti, S. Terracini, G. Verzini, Nehari’s problem and competing species system. Ann. Inst. H. Poincaré C Anal. Non Linéaire 19 (2002), 871--888.
\bibitem{11} 
B. D. Esry, Chris H. Greene, James P. Burke, Jr., John L. Bohn, Hartree--Fock theory for double condensates. Phys. Rev. Lett. 78 (1997), 3594--3597.

\bibitem{12} B. D. Esry, Chris H. Greene, Spontaneous spatial symmetry breaking in two-component Bose--Einstein condensates. Phys. Rev. A  59 (1999), 1457--1460.

 \bibitem{15}  D. S. Hall, M. R. Matthews, J. R. Ensher, C. E. Wieman, E. A. Cornell, Dynamics of component separation in a binary mixture of Bose--Einstein
condensates. Phys. Rev. Lett. 81(1998), 1539--1542.


\bibitem{liweiwu} T. Li, J. Wei, Y. Wu, Infinitely many nonradial positive solutions for multi-species nonlinear Schrödinger systems in $\mathbb R^N$. arXiv:2210.03330.

\bibitem{M}
B. Malomed, Multi-component Bose-Einstein condensates: theory, in: P.G. Kevrekidis, D.J. Frantzeskakis, R. Carretero-
Gonzalez (Eds.), Emergent Nonlinear Phenomena in Bose–Einstein Condensation, Springer-Verlag, Berlin, 2008,
pp. 287–305.

\bibitem{MM}
M. Medina, M. Musso, Doubling nodal solutions to the Yamabe equation in $\mathbb R^n$ with maximal rank. J. Math. Pures Appl. 152 (2021), 145--188.

\bibitem{29}  B. Noris, H. Tavares, S. Terracini, G. Verzini, Uniform H\"older bounds for nonlinear Schrödinger systems with strong
competition. Commun. Pure Appl. Math. 63 (2010), 267--302.
 \bibitem{23}A. S. Parkins, D. F. Walls, The Physics of trapped dilute-gas Bose--Einstein condensates. Phys. Rep. 303 (1998), 1--80.

%\bibitem{MMW}
%M. Medina, M. Musso, J. Wei, Desingularization of Clifford torus and nonradial solutions to the Yamabe problem with maximal rank. J. Funct. Anal. 276 (2019), 2470–2523. 

%\bibitem{MWY}
%M. Musso, J. Wei, S. Yan, Infinitely many positive solutions for a nonlinear field equation with super-critical growth. Proc. Lond. Math. Soc. 112 (2016), 1-26.

\bibitem{PWY}
S. Peng, C. Wang, S. Yan, Construction of solutions via local Pohozaev identities. J. Funct. Anal. 274 (2018), 2606-2633. 

\bibitem{PW}
S. Peng, Z. Wang, Segregated and synchronized vector solutions for nonlinear Schrödinger systems. Arch. Ration. Mech. Anal. 208 (2013), 305--339.

%\bibitem{PST}
%A. Pistoia, N. Soave, H. Tavares, A fountain of positive bubbles on a Coron's problem for a competitive weakly coupled gradient system, J. Math. Pures Appl. 135 (2020), 159--198.
%
%\bibitem{PT}
% A. Pistoia, H. Tavares, Spiked solutions for Schrödinger systems with Sobolev critical exponent: the cases of competitive and weakly cooperative interactions, J. Fixed Point Theory Appl. 19 (2017), 407--446.

\bibitem{PV}
A. Pistoia, G. Vaira, Segregated solutions for nonlinear Schrödinger systems with weak interspecies forces. Comm. Partial Differential Equations DOI: 10.1080/03605302.2022.2109488 (2022).
\bibitem{37} H. Tavares, S. Terracini, Sign-changing solutions of competition-diffusion elliptic systems and optimal partition problems.
Ann. Inst. H. Poincaré C Anal. Non Linéaire 29 (2012), 279--300.

\bibitem{38} H. Tavares, S. Terracini, G. Verzini, T. Weth, Existence and nonexistence of entire solutions for non-cooperative cubic
elliptic systems. Commun. Partial Differ. Equ. 36 (2011), 1988--2010.

\bibitem{39} S. Terracini, G. Verzini, Multipulse phase in $k$-mixtures of Bose-Einstein condensates. Arch. Ration. Mech. Anal. 194
(2009), 717--741.
%\bibitem{TYZ}
% H. Tavares, S. You, W. Zou, Least energy positive solutions of critical Schr\"odinger systems with mixed competition and cooperation terms: the higher dimensional case, J. Funct. Anal. 283 (2022),  Paper No. 109497, 50 pp.
 
\bibitem{WY}
J. Wei,  S. Yan, Infinitely many solutions for the prescribed scalar curvature problem on $S^N$. J. Funct. Anal. 258 (2010), 3048--3081.
\bibitem{WW}J. Wei, Y. Wu, Ground states of nonlinear Schr\"odinger systems with mixed couplings. J. Math. Pures Appl. 141 (2020), 50--88.

\end{thebibliography}
\end{document}